    \pgfplotsset{compat=1.18}
\newtheorem{theorem}{Theorem}
\newtheorem{remark}{Remark}
\newtheorem{definition}{Definition}
\newtheorem{lemma}{Lemma}
\DeclareMathOperator{\dive}{div}
\DeclareMathOperator{\grad}{\nabla}
\DeclareMathOperator{\pseudoinv}{\delta^\dagger}
\newcommand{\bunderline}[1]{\underline{#1\mkern-4mu}\mkern4mu }
\newcommand{\R}{\mathbb R}
\newcommand{\Hone}[1]{H^1(\Omega_{#1}) }
\newcommand{\normL}[2]{\|#1\|_{L^2(#2)}}
\newcommand{\normHzero}[2]{ \|#1\|_{H^{1/2}_{00}(#2) }}
\newcommand{\seminormH}[2]{|#1|_{H^1(\Omega_{#2})}}
\newcommand{\seminormHhalf}[2]{|#1|_{H^{1/2}(#2)}}
\newcommand{\seminormS}[2]{|#1|_{S'_{#2}}}
\newcommand{\jump}[1]{\llbracket #1 \rrbracket}
\newcommand{\sumN}{\sum_{i=0}^{N}}
\newcommand{\sumE}{\sum_{\mathcal E}}
\newcommand{\sumV}{\sum_{\mathcal V}}
\newcommand{\sumEij}{\sum_{j\ne i}}
\newcommand{\sumjNx}{\sum_{j \in \mathcal N_x}}
\newcommand{\vi}{v_i}
\newcommand{\vj}{v_j}
\newcommand{\vii}{v_{ij, i}}
\newcommand{\vij}{v_{ij, j}}
\newcommand{\uii}{u_{i,i}}
\newcommand{\uij}{u_{i,j}}
\newcommand{\uji}{u_{j,i}}
\newcommand{\ujj}{u_{j,j}}
\newcommand{\barEij}{\bunderline E_{ij}}
\newcommand{\barEji}{\bunderline E_{ji}}
\newcommand{\Eij}{E_{ij}}
\newcommand{\Eji}{E_{ji}}
\newcommand{\ThetaE}{\Theta_{\mathcal E}}
\newcommand{\ThetaV}{\Theta_{\mathcal V}}
\newcommand\numberthis{\addtocounter{equation}{1}\tag{\theequation}}
	\let\@fnsymbol\@arabic
\title{Convergence analysis of BDDC preconditioners for composite DG discretizations of the cardiac cell-by-cell model}
\author{
        Ngoc Mai Monica Huynh\thanks{
	    Dipartimento di Matematica, Universit\`a degli Studi di Pavia, 
		Via Ferrata, 27100 Pavia, Italy.
		E-mail: {\sf ngocmaimonica.huynh@unipv.it},
                {\sf luca.pavarino@unipv.it}.
		}
	\and 
	Fatemeh Chegini\thanks{
	    Zuse Institute Berlin, 
        Takustraße 7, 14195 Berlin, Germany. 
	    E-mail: {\sf chegini@zib.de},
	            {\sf weiser@zib.de}.
	}
	\and
	Luca Franco Pavarino\footnotemark[1]
	\and 
	Martin Weiser\footnotemark[2]
	\and
	Simone Scacchi\thanks{
	    Dipartimento di Matematica, Universit\`a degli Studi di Milano,
		Via Saldini 50, 20133 Milano, Italy.
		E-mail: {\sf simone.scacchi@unimi.it}.
		}
	}
\date{}
\begin{document}

\maketitle

\begin{abstract}
	A Balancing Domain Decomposition by Constraints (BDDC) preconditioner is constructed and analyzed for the solution of composite Discontinuous Galerkin discretizations of reaction-diffusion systems of ordinary and partial differential equations arising in cardiac cell-by-cell models. Unlike classical Bidomain and Monodomain cardiac models, which rely on homogenized descriptions of cardiac tissue at the macroscopic level, the cell-by-cell models enable the representation of individual cardiac cells, cell aggregates, damaged tissues, and nonuniform distributions of ion channels on the cell membrane.  The resulting discrete cell-by-cell models exhibit discontinuous global solutions across the cell boundaries. Therefore, the proposed BDDC preconditioner employs appropriate dual and primal spaces with additional constraints to transfer information between cells (subdomains) without affecting the overall discontinuity of the global solution. A scalable convergence rate bound is proved for the resulting BDDC cell-by-cell preconditioned operator, while numerical tests validate this bound and investigate its dependence on the discretization parameters.
\end{abstract}

\section{Introduction}

The purpose of this work is to design and analyze a Balancing Domain Decomposition by Constraints (BDDC) preconditioner for the solution of composite Discontinuous Galerkin (DG) discretizations of cardiac cell-by-cell models. 
The latter have been introduced in recent years in order to overcome some of the limitations of the macroscopic Bidomain and Monodomain models, which are based on a homogenized description of the cardiac tissue at  the macroscopic level, see \cite{tung1978, veneroni2009reaction}, \cite[Ch.~3]{franzone2014mathematical}. In this description, cell membrane, intracellular and extracellular spaces coexist at each point of the myocardium, see Figure \ref{fig:homogenization}, and therefore these macroscopic cardiac models cannot be used for a spatial description at the cellular scale.
Moreover, the Bidomain model is based on the assumption that all cells are uniformly connected, which is not always valid.
	
	\begin{figure}
		\centering
		\includegraphics[scale=.35]{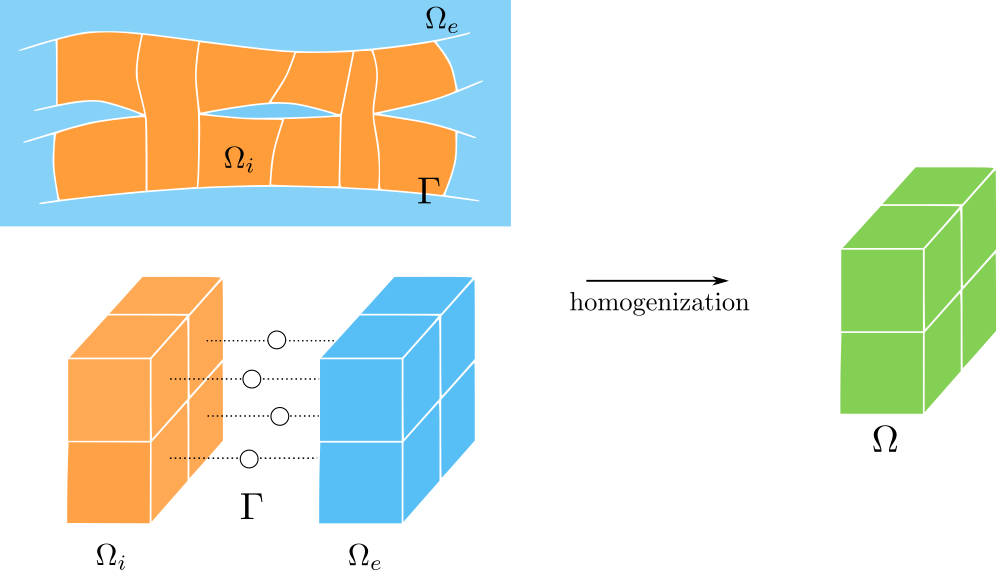}
		\caption{Exemplification of homogenization: the intracellular space $\Omega_i$ of a cell (depicted in orange, on the left side) is assumed to co-exist at every point of the myocardium $\Omega$ (representation on the right side, in green) with the extracellular space $\Omega_e$ (depicted in light blue) and the cell membrane $\Gamma$.}
		\label{fig:homogenization}
	\end{figure}

These limitations can be removed by considering cell-by-cell models based on representing individual cardiac cells, which are important for realistic modeling of damaged tissue and for studying the effects of nonuniform distributions of ion channels on the cell membrane. 		
Among the first cell-by-cell mathematical models, the EMI (Extracellular space, cell Membrane and Intracellular space) model has been derived and studied in \cite{tveito2021bis, tveito2021tris, jaeger2023reintroducing, tveito2017, tveito2021}.
This model considers these three domains in a coupled manner, allowing for discontinuous potentials across cell boundaries, general representations of cell geometries, as well as particular distributions of ion charges on the cell membranes. 
Other tridomain cardiac models have been proposed in \cite{bader1_2022,bader2_2022,sathar2015}, both at the microscopic and macroscopic level.
	
In this paper, we consider DG discretizations \cite{cockburn2012discontinuous} of cell-by-cell models in order to approximate the discontinuous nature of cellular aggregates and the associated electrical potentials. 
Arbitrary discretizations can be considered for each cell (or subdomain), allowing us to consider conforming standard finite element discretizations in each cell, while the global solution is allowed to be discontinuous across the cells. Other recent works focus on boundary elements methods, we refer to \cite{pezzuto2022, desouza2023boundary} and references therein. 
	
While efficient parallel preconditioners have been proposed for the standard Bidomain and Monodomain cardiac models, see \cite{plank2007algebraic}, \cite{vigmond2008solvers} and \cite[Ch.~8]{franzone2014mathematical}, the design of scalable preconditioners is still an open problem for the more challenging cardiac cell-by-cell models and only few works exploit the potentialities of modern computing architectures \cite{potse_siampp2022}.
Previous works on dual-primal preconditioners for continuous Galerkin discretizations of the Bidomain equations, \cite{zampini2014dual} for implicit-explicit time discretizations and \cite{huynh2022parallel,huynh2021newton} for implicit time discretizations, cannot be applied to cell-by-cell models due to the discontinuous Galerkin discretization required to approximate discontinuous potentials.
So far, few studies have focused on dual-primal preconditioners for DG discretizations of scalar elliptic partial differential equations \cite{canuto2014bddc, dryja2013analysis, dryja2015deluxe}.
By leveraging the previous works \cite{dryja2013analysis, dryja2015deluxe}, we design Balancing Domain Decomposition by Constraints (BDDC) preconditioners for composite DG-type discretizations of cardiac cell-by-cell/EMI models,
where each cell is associated with a subdomain of our domain decomposition. 	
One key point of our BDDC construction in a DG setting is the definition of extended dual and primal spaces for the degrees of freedom. Dual-primal methods in standard continuous Galerkin settings partition the degrees of freedom belonging to the subdomain boundaries into dual and primal, where the global solution is considered by the iterative algorithm as discontinuous and continuous, respectively; see \cite{dohrmann2003}, \cite[Ch. 6]{toselli2006domain}. The situation is different in our DG setting where the global solution fields of the cell-by-cell model are required to be discontinuous across subdomains.
Indeed, it is necessary to enrich dual and primal spaces with additional constraints which transfer information between subdomains without influencing the overall discontinuity of the global solution. We note that while the focus of this paper is on cardiac cell-by-cell/EMI models, our study can be extended to DG discretizations of general elliptic and parabolic equations.

The paper is structured as follows: in Section \ref{sec: model}, we briefly introduce a simplified cardiac cell-by-cell model, referring to \cite{tveito2021bis, tveito2017, tveito2021} for further details on the derivation of the EMI model; we present instead concise formulations for time and space composite-DG discretizations. Then in Section~\ref{sec: preconditioner}, we introduce the BDDC preconditioner for this type of composite-DG discretizations. The main result of the paper can be found in Section \ref{sec: theoretical convergence}, where a complete proof for the convergence is provided. Numerical tests validate the theoretical bound through serial simulations, reported in Section~\ref{sec: numerical results}. Section \ref{sec: conclusions} concludes the paper with synthetic discussion and comments.
	
	
	\section{The cell-by-cell model}\label{sec: model}
	In this paper, we consider a simplified EMI model, assuming that two or more cells can be connected neighbours through the cell membrane, without distinguishing between gap junctions and secondary ionic channels. A previous study on microscopic reaction-diffusion equations in cardiac electrophysiology can be found in \cite{veneroni2006reaction}.
	
	We consider $N$ cells immersed in the extracellular liquid, which altogether form the cardiac tissue $\Omega$, where $\Omega\subset\R^d$, with $d\in\{2,3\}$.
	We assume that each of these $N+1$ objects constitutes a single subdomain $\Omega_i$, with $i=0,\dots,N$, where we denote with $\Omega_0$ the extracellular subdomain.
	These cells interact with the extracellular surroundings and their neighboring cells by means of the ionic currents. We also assume that the intracellular spaces are connected through the gap junctions, special protein channels which allow the passage of ions directly between two intracellular environments, \cite{rohr2004}
	
	\begin{figure}[!ht]
		\centering
		\includegraphics[scale=.35]{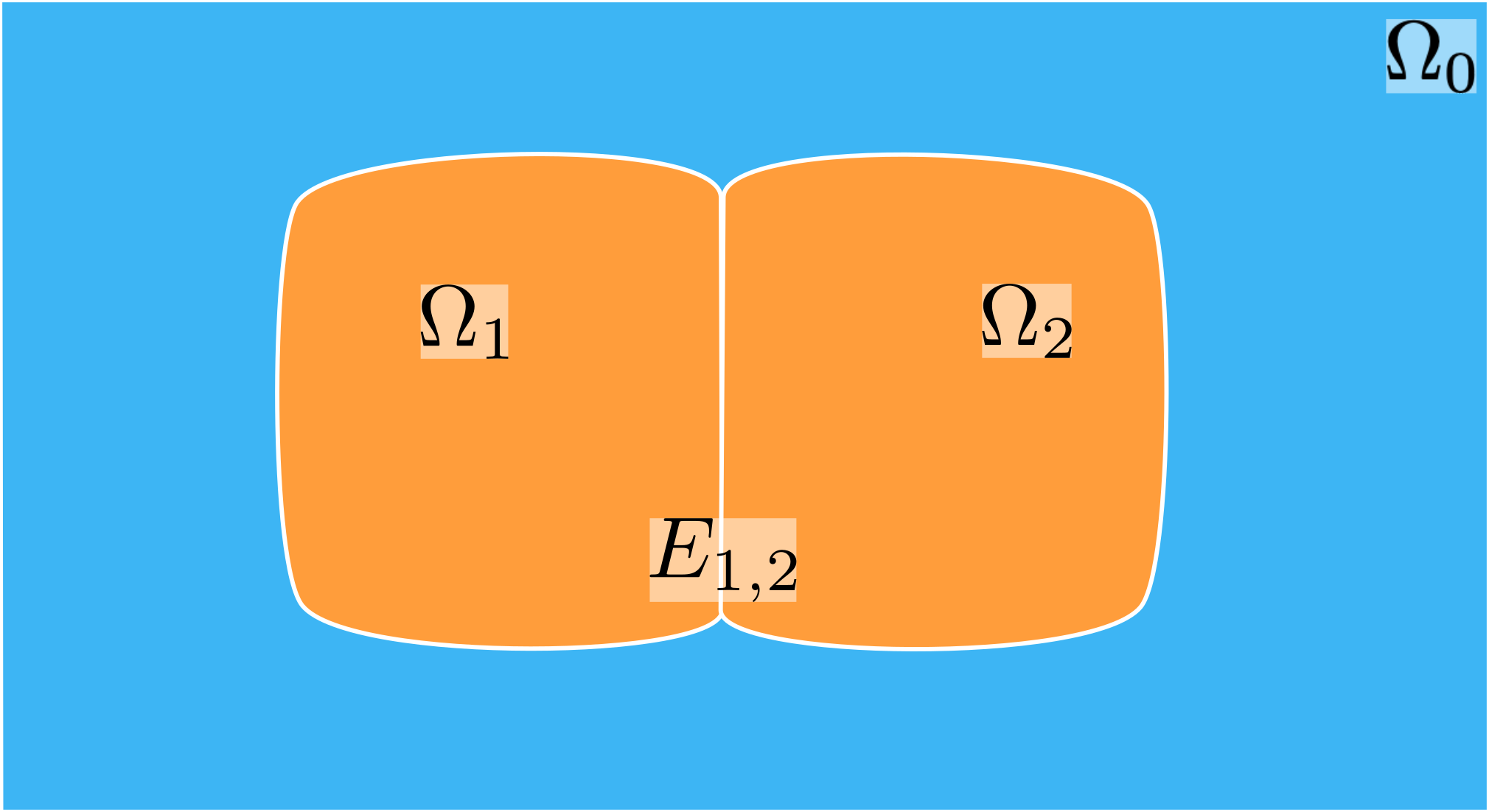}
		\caption{\emph{Simplified EMI model.} In this example, we depict a simple situation with only two cells, whose intracellular spaces are denoted by $\Omega_1$ and $\Omega_2$, while $\Omega_0$ denotes the extracellular surrounding; $F_{1,2}$ is assumed to be the intersection of the cell membrane between the two cells.  }
	\end{figure}
	
	This framework can be described by the following model
	\begin{equation} \label{eq: cell-by-cell model}
		\begin{cases}
			- \dive (\sigma_i \grad u_i) = 0		&\qquad \text{in }\Omega_i, \ \ i=0,\dots, N	\\
			- n_i^T \sigma_i \grad u_i = C_m \dfrac{\partial v_{ij} }{\partial t} + F(v_{ij})			&\qquad \text{on } \Eij = \overline{\Omega}_i \cap \overline{\Omega}_j \subset \partial \Omega_i, \; i\ne j		\\
            n^T\sigma_i\nabla u_i  = 0 & \qquad \text{on $\partial\Omega_i \cap \partial\Omega$} \\
			\dfrac{\partial c}{\partial t} - C(v_{ij}, w, c) = 0, 
			\qquad 
			\dfrac{\partial w}{\partial t} - R(v_{ij}, w) = 0,  
		\end{cases}
	\end{equation}	
	where $\sigma_i$ is the conductivity coefficient in $\Omega_i$, $n_i$ the outward normal on $\partial \Omega_i$, and $C_m$ the membrane capacitance for unit area of the membrane surface. The last two equations in the system (\ref{eq: cell-by-cell model}) model the ion flow dynamic by means of ordinary differential equations, describing the time evolution of ion concentrations $c$ and gating variables $w$.
	The transmembrane voltage $v_{ij} = u_i - u_j$ represents the jump in the value of the electric potentials between either two cells or a cell and the extracellular space, while $F(v_{ij})$ represents either the ionic current $I_\text{ion}(v_{ij}, c, w)$ or the gap junction current $G(v_{ij})$. 
	In this study, the gap junction terms $G(v)$ are assumed to be linear in the potential jumps $v$. We refer to \cite{tveito2021bis, tveito2017} for a formal derivation of the complete EMI system. 

 The solution obtained from system \eqref{eq: cell-by-cell model} is defined up to an arbitrary constant: in order to ensure uniqueness from a numerical point of view, we impose zero average over its domain for the extracellular component $u_0$. Moreover, we remark that this is not a standard parabolic problem but, as for the Bidomain model, it is a partial Differential Algebraic Equations (DAEs) system.
	
	We consider a splitting strategy for the time solution of system (\ref{eq: cell-by-cell model}). At each time step, we solve first the ionic model, given the jump $v_{ij}$ from the previous time step; then, we update the cell-by-cell model with the newly-computed $c$ and $w$ and solve it with respect to the electric potential. In this sense, throughout the next Sections, we will focus only on the discretization and preconditioning strategy of the cell-by-cell model.
	Additionally, since the purpose of this work is to study the theoretical convergence of a preconditioner for the resulting discrete linear system, we consider the simplified situation of a two-dimensional setting, leaving the three-dimensional case for further numerical studies.

	\subsection{Weak formulation}
	Integrating by parts over $\Omega_i$ the first equations in (\ref{eq: cell-by-cell model}) and using the second equation on the membrane, we obtain the  $i$-th problem: find $u_i \in  \Hone{i}$ such that 
	\begin{align*}
		0 &= - \int_{\Omega_i} \dive (\sigma_i \grad u_i) \, \phi_i \, dx		
		= \int_{\Omega_i} \sigma_i \grad u_i \grad \phi_i \, dx 	- 	\int_{\partial \Omega_i} n_i^T \sigma_i \grad u_i \phi_i \, ds	\\
		&= \int_{\Omega_i} \sigma_i \grad u_i \grad \phi_i \, dx 	+ \sumEij \int_{\Eij} \left( C_m \dfrac{\partial v_{ij} }{\partial t} + F(v_{ij}) \right) \phi_i \, ds
			\qquad
		\forall \phi_i \in \Hone{i}.
	\end{align*}
	By summing all contributions from the $N+1$ subdomains, the global problem is given by
	\begin{equation*}
		\sumN \int_{\Omega_i} \sigma_i \grad u_i \grad \phi_i \, dx 	+ \dfrac{1}{2} \sumN \sumEij \int_{\Eij} \left( C_m \dfrac{\partial \jump{u}_{ij} }{\partial t} + F(\jump{u}_{ij}) \right) \jump{\phi}_{ij} \, ds = 0,
	\end{equation*}
	where we denote with $\jump{u}_{ij} = u_i - u_j$ the jump between the value of the electric potential $u_i$ and its neighboring $u_j$ from the adjacent subdomain $\Omega_j$ along the common boundary $\Eij \subset \partial \Omega_i$.

	\subsection{Time discretization}
	We consider an implicit-explicit (IMEX) time discretization, where the diffusion term is treated implicitly and the reaction explicitly. We discretize the time interval $[0,T]$ into $K$ intervals and, by defining the time step $\tau = t^{k+1} - t^k$, for $k=0,\dots,K$, we obtain the following scheme: 
	\begin{equation*}
		\dfrac{1}{2} \sumN \sumEij \int_{\Eij} C_m \dfrac{\jump{u^{k+1}}_{ij} - \jump{u^k}_{ij} }{\tau} \jump{\phi}_{ij} \, ds \, + \sumN \int_{\Omega_i} \sigma_i \grad u_i^{k+1} \grad \phi_i \, dx 
		= -\dfrac{1}{2} \sumN \sumEij \int_{\Eij} F(\jump{u^k}_{ij}) \jump{\phi}_{ij} \, ds ,
	\end{equation*}
	and, by rearranging the terms, 
	\begin{multline*}
		\dfrac{1}{2} \sumN \sumEij \int_{\Eij} C_m \jump{u^{k+1}}_{ij} \jump{\phi}_{ij} \, ds + \tau \sumN \int_{\Omega_i} \sigma_i \grad u_i^{k+1} \grad \phi_i \, dx \\
		= \dfrac{1}{2} \sumN \sumEij \int_{\Eij} C_m \jump{u^{k}}_{ij} \jump{\phi}_{ij} \, ds - \tau \dfrac{1}{2} \sumN \sumEij \int_{\Eij} F(\jump{u^k}_{ij}) \jump{\phi}_{ij} \, ds.
	\end{multline*}
	We define the above quantities as follows:
	\begin{equation} \label{eq: a_i p_i}
		\begin{split}
			a_i (u_i, \phi_i) &:= \int_{\Omega_i} \sigma_i \grad u_i \grad \phi_i \, dx					\\
		p_i (u_i, \phi_i) &:= \dfrac{1}{2} \sumEij \int_{\Eij} C_m \jump{u}_{ij} \jump{\phi}_{ij} \, ds	\\
		f_i (\phi_i) &:= \dfrac{1}{2} \sumEij \int_{\Eij} \left( C_m \jump{u}_{ij} \jump{\phi}_{ij} - \tau F(\jump{u}_{ij}) \jump{\phi}_{ij} \right) \, ds 
		\end{split}
	\end{equation}
	\begin{equation} \label{eq: widehat a_i}
        d_i (u_i, \phi_i) := \tau a_i(u_i, \phi_i) + p_i(u_i, \phi_i).
	\end{equation}

	\subsection{Space discretization}
	n the rest of the paper, we will focus only on the two-dimensional case ($d=2$).
    Let $V_i(\overline \Omega_i)$ be the regular finite element space of piecewise continuous linear functions in $\overline \Omega_i$ and define \mbox{$V (\Omega) = V_0(\overline \Omega_0) \times \dots \times V_N(\overline \Omega_{N})$} as the global finite element space. 

	We follow the notation proposed in \cite{dryja2015deluxe}:
	\begin{itemize}
	    \item We denote with $\barEij := \partial \Omega_i \cap \partial \Omega_j$ and $\barEji := \partial \Omega_j \cap \partial \Omega_i$ an edge of $\partial \Omega_i$ and $\partial \Omega_j$ respectively; we observe that, although $\barEij$ and $\barEji$ are geometrically the same object, they are treated separately, since one could possibly consider different triangulations on $\barEij$ and $\barEji$, e.g., \cite{dryja2013analysis, dryja2015deluxe}.
	    \item We define $\mathcal E_i^0$ the set of indices $j$ of subdomain $\Omega_j$ that share an edge $\Eji$ with $\Omega_i$. Analogously, we introduce $\mathcal E_i^\partial$ as the set of indices referred to the edges of $\Omega_i$ which belong to $\partial \Omega$. Therefore, the set of indices of all edges of $\Omega_i$ is denoted by
	    $$
	        \mathcal E_i := \mathcal E_i^0 \cup \mathcal E_i^\partial.
	    $$
	\end{itemize}
	The general problem reads: find $u=\{u_i\}_{i=0}^{N} \in V(\Omega)$ such that 
	\begin{equation} \label{eq: glob sys}
		d_h(u, \phi) = f(\phi),
		\qquad \forall\phi=\{\phi_i\}_{i=0}^{N} \in V(\Omega),
	\end{equation}
where
$		d_h (u, \phi) := \sumN d_i (u_i, \phi_i) = \sumN \left( \tau \, a_i (u_i, \phi_i) + p_i (u_i, \phi_i)\right), \ \ 
		f(\phi) := \sumN f_i (\phi_i). $
Denoting by$A_i$ and mass $M_i$ the local stiffness and mass matrices, (\ref{eq: glob sys}) can be written 
in matrix form as
	\begin{equation} \label{eq: global algebraic system}
	    \mathcal K \mathbf u = \mathbf f,
	    \qquad
	    \text{with } 
	    \qquad \mathcal K = \sum_{i=0}^N \mathcal K_i,
	    \qquad
	    \mathcal K_i = \tau A_i + M_i.
	\end{equation}
	
	\section{BDDC, spaces, scaling} \label{sec: preconditioner}
	In this Section, we will construct suitable domain decomposition dual-primal spaces for composite-DG discretizations of the cell-by-cell model, with minimal graphic and matrix examples. Then, we will define the BDDC preconditioner on these enriched spaces and we conclude with some technical tools that will be employed in the next Section for the theoretical proof. The construction of the BDDC solver is inspired by the previous works \cite{dryja2013analysis, dryja2015deluxe} on dual-primal solvers for DG discretizations of scalar elliptic equations.
	
	\subsection{ Non-overlapping dual-primal spaces} 
	Let us assume that the $N+1$ subdomains form a partition of the cardiac tissue $\Omega$, such that $\overline{\Omega} = \cup_{i=0}^{N} \overline{\Omega}_i$, $ \Omega_i \cap \Omega_j = \emptyset $ if $ i \neq j$ and the intersection of the boundaries $ \partial \Omega_i \cap \partial \Omega_j $ is either empty, a vertex or a face. 
	Each subdomain is a union of shape-regular conforming finite elements.
	The interface $\Gamma$ is the set of points that belong to at least two subdomains,
	\begin{equation*}
		\Gamma := \bigcup_{i=0}^N \Gamma_i, 	\qquad \qquad 	\Gamma_i: = \overline{\partial \Omega_i \backslash \partial \Omega}.
	\end{equation*} 
	In order to correctly take into account the natural jumps of the electric potentials along the boundaries of each cell, we deal with DG-type discretizations, where each of the degrees of freedom belonging to the interface will have multiplicity depending on the number of subdomains (or cells) that shares it.
	
	We assume that subdomains are shape regular and have a characteristic diameter of size $H_i$ whereas the finite elements are of diameter $h$; we denote by $H = \max_i H_i$. 

	Denoting $\Omega_i' = \overline \Omega_i \cup  \bigcup_{j \in \mathcal E_i^0} \barEji$,  the union of nodes in $\overline \Omega_i$ and nodes on edges $\barEji \subset \partial \Omega_j$, with $j \in \mathcal E_i^0$, we define the associated local finite element spaces 
	$$
	    W_i(\Omega_i') = V_i (\overline \Omega_i) \times \prod_{j \in \mathcal E_i^0} W_i (\barEji),
	$$
	with $W_i (\barEji)$ the trace of the space $V_j (\overline \Omega_j)$ on $\barEji \subset \partial \Omega_j$, for all $j \in \mathcal E_i^0$. A very schematic representation can be found in Figure \ref{fig: subdomain dofs}, where for simplicity we have considered the contribution of four adjacent subdomains $\Omega_j$; moreover, let $\Xi_\Omega$ denote the index set of the geometric object $\Omega$.
        \begin{figure}
	    \centering
	    \includegraphics[scale=.25]{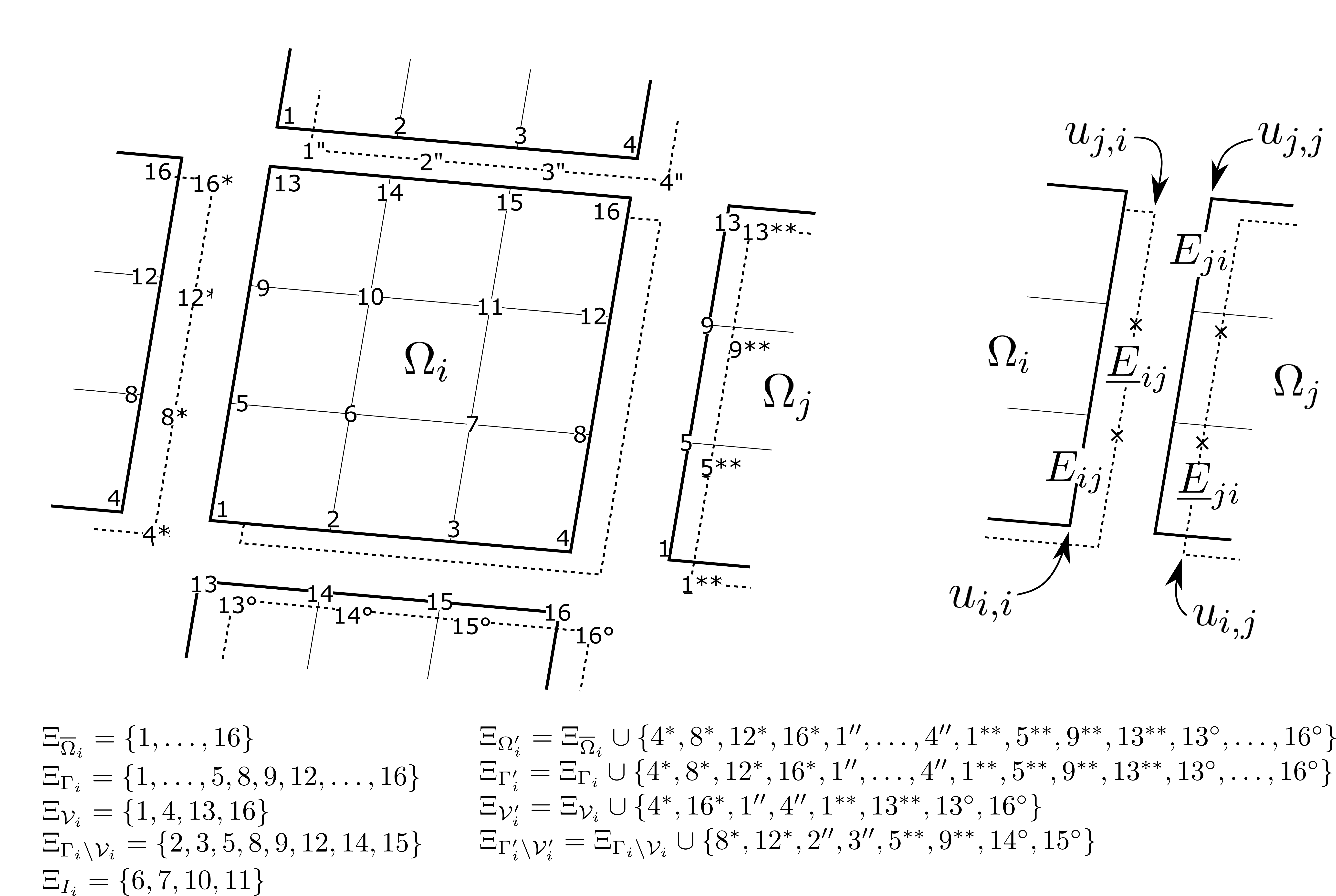}
	    \caption{Schematic partition of degrees of freedom of the substructure $\Omega_i$ (left) and zoom over the common geometric boundary $\partial \Omega_i \cap \partial \Omega_j$ (right). The index sets below the Figures are related to the substructure $\Omega_i$.}
	    \label{fig: subdomain dofs}
	\end{figure}
	Each element of this space can be represented as
	$ u_i = (\uii, \{ \uij \}_{j \in \mathcal E_i^0} ),$
	see Figure~\ref{fig: subdomain dofs} on the right.
	For example, in a two-dimensional structured decomposition, where each subdomain $\Omega_i$ can be surrounded by four subdomains $\Omega_{j_1}$, $\Omega_{j_2}$, $\Omega_{j_3}$, $\Omega_{j_4}$, we have
	\begin{equation} \label{eq: u_i}
	    u_i = (\uii, \, u_{i,j_1}, \, u_{i,j_2}, \, u_{i,j_3}, \, u_{i,j_4} ).
	\end{equation}
	We note that this representation naturally induces the definition of $d_i$ on the spaces $W_i(\Omega_i') \times W_i(\Omega_i')$ from Eq. (\ref{eq: widehat a_i}), along with its corresponding matrix $\mathcal K'_i$. 
	
	The space $W_i(\Omega_i')$ is further partitioned into its interior part $W_i(I_i)$ and into the finite element trace space $W_i(\Gamma_i')$, 
	$$
	    W_i(\Omega_i') = W_i(I_i) \times W_i(\Gamma_i'),
	$$
	where $\Gamma_i' := \Gamma_i \cup \left\{ \bigcup_{j \in \mathcal E_i^0} \barEji \right\}$ denote the local interface nodes in $\Omega_i'$.
	This representation allows us to define (\ref{eq: u_i}) also as
	$
	    u_i = ( u_{i,I}, \, u_{i, \Gamma'} )
	$,
	where $u_{i,I}$ denotes the values of $u_i$ at the interior nodal points on $I_i$, while $u_{i, \Gamma'}$ are the values at the nodal points on $\Gamma_i'$.
	We consider the product spaces 
	$$
		W(\Omega') := \prod_{i=0}^{N} W_i (\Omega_i'),
	\qquad
		W (\Gamma') :=  \prod_{i=0}^{N} W_i(\Gamma_i'),
	$$
	where $u \in W(\Omega')$ means that $u=\{u_i\}_{i=0}^{N}$, with $u_i \in W_i (\Omega_i')$ and, in a very similar way, $u_{\Gamma'} \in W(\Gamma')$. Here $\Gamma'=\prod_{i=0}^N \Gamma_i'$ is the global broken interface. The latter space can be interpreted as the subspace of $W(\Omega')$ of functions which are discrete harmonic in the sense of $\mathcal H'_i$ in each $\Omega_i'$.
	On a general side, for our purposes, we define the discrete harmonic extension of the Laplacian operator on $\partial \Omega_i$ as follows, and we refer to \cite{dryja2013analysis} for more details. 
	
	\begin{definition}\label{def:harmonic-extension}
		We define $\mathcal{H}_i^\Delta$ the discrete harmonic extension of the Laplacian operator on $\partial \Omega_i$ as
		\begin{equation*}
			u = \mathcal{H}_i^\Delta u_\Gamma 
			\qquad
			\Leftrightarrow
			\qquad
			\begin{cases}
				- \Delta u = 0		&\text{in } \partial \Omega_i	\\
				u = u_\Gamma 		&\text{on }	\Gamma_i		\\
				u = 0				&\text{on }	\partial \Omega_i \backslash \Gamma_i
			\end{cases}
		\end{equation*}
	\end{definition}
	
	In dual-primal methods, we iterate in the space $W$ while requiring \textit{primal constraints} (which are continuity constraints) to hold throughout the iterations. These constraints guarantee that each subdomain problem is invertible and that a good convergence bound can be obtained.
	Let us denote with $\mathcal V_i$ and $\mathcal V_i'$ the set of nodal points associated to the corner unknowns of $\overline \Omega_i$ and $\Omega_i'$ respectively
	\begin{equation*}
	    \mathcal V_i := \bigcup_{j \in \mathcal E_i^0} \partial \Eij  ,
	    \qquad
	    \mathcal V_i' := \mathcal V_i \cup \bigcup_{j \in \mathcal E_i^0} \partial \Eji  .
	\end{equation*}
	
	As in \cite[Definition 3.3]{dryja2015deluxe}, we consider $\widetilde W(\Omega')$ as the space of finite element functions in $W(\Omega')$ which are continuous on all $\mathcal V_i'$ in the following sense.
	\begin{definition}
	    A function $u=\{u_i\}_{i=0}^{N} \in W(\Omega')$ is continuous at the corners $\mathcal V_i'$, if  
	    \begin{align*}
	        u_{i, \,\partial \Omega_i} (x) &= u_{j, \,\Eij} (x)       
	        \qquad
	        \text{at } x \in \partial \Eij, 
	        \qquad
	        \text{for all } j \in \mathcal E_i^0 ,   \\
	        u_{i, \, \Eji} (x) &= u_{j, \, \partial \Omega_j} (x)       
	        \qquad
	        \text{at } x \in \partial \Eji, 
	        \qquad
	        \text{for all } j \in \mathcal E_i^0   ,
	    \end{align*}
	    being $u_{i, \, \star}$ the values of function $u_i$ on $\star$
	    for $i=0, \dots, N$, $\star \in \{ \partial \Omega_i, \, \partial \Omega_j, \, \Eij, \,  \Eji \}$.
	    The subspace of $W(\Omega')$ of continuous functions at the corners $\mathcal V_i'$, for $i=0, \dots, N$, is denoted by $\widetilde W(\Omega')$. Conversely, $\widetilde W(\Gamma')$ is the subspace of $\widetilde W(\Omega')$ of functions which are discrete harmonic in the sense of $\mathcal H_i'$. Clearly, we have
	    \begin{equation*}
	        \widetilde W(\Gamma') \subset \widetilde W(\Omega') \subset W(\Omega').
	    \end{equation*}
	\end{definition}
	In this sense, any function $u \in \widetilde W(\Omega')$ can be represented as
 $	  u = (u_I, \, u_\Delta, \, u_\Pi),$
	where the subscript $I$ denotes the \emph{interior} degrees of freedom at nodal points $I=\prod_{i=0}^N I_i$, $\Pi$ refers to the corners $\mathcal V_i'$ (also called \emph{primal}), and $\Delta$ refers to the remaining nodal points $\Gamma_i' \backslash \mathcal V_i'$ (also known as \emph{dual}), for all $i=0,\dots, N$. 
    We introduce the spaces
    $$
        W_\Delta (\Gamma') = \prod_{i=0}^N W_{i, \Delta} (\Gamma_i')
    \qquad
    \text{and}
    \qquad
        \widetilde W_\Pi (\Gamma'),
    $$
    where $W_{i, \Delta} (\Gamma_i')$ are the local spaces referred to the degrees of freedom associated to the nodes of $\Gamma_i' \backslash \mathcal V_i'$.
    Thus, we can further decompose the space $\widetilde W(\Gamma')$ as
    \begin{equation*}
        \widetilde W(\Gamma') = W_\Delta (\Gamma') \times \widetilde W_\Pi (\Gamma'),
    \end{equation*}
    from which it follows the representation
    \begin{equation*}
        u_{\Gamma'} \in \widetilde W(\Gamma') 
        \qquad
        u_{\Gamma'}=(u_\Delta, \, u_\Pi),
    \end{equation*}
    with $u_\Pi \in \widetilde W_\Pi (\Gamma')$ and $u_\Delta = \{ u_{i, \Delta} \}_{i=0}^N \in W_\Delta (\Gamma')$. It is important to notice that $u_{i, \Delta}$ can be partitioned as
    \begin{equation*}
        u_{i, \Delta} = \{ u_{i, \Eij}, \, u_{i, \Eji} \}_{j \in \mathcal E_i^0},
    \end{equation*}
    where $u_{i, \Eij}$ and $u_{i, \Eji}$ are the restrictions of $u_{i, \Delta}$ to $\Eij$ and $\Eji$ respectively.
    
    Lastly, we introduce the space 
    \begin{equation*}
        \widehat W_\Delta (\Gamma) = \prod_{i=0}^N V_i (\Gamma_i \backslash \mathcal V_i),
    \end{equation*}
    where $V_i (\Gamma_i \backslash \mathcal V_i)$ is the restriction of $V_i (\Gamma_i)$ to $\Gamma_i \backslash \mathcal V_i$. 
	
	The reordering of the degrees of freedom induces a similar reordering at the algebraic level: assuming that the local problems (\ref{eq: widehat a_i}) can be written in algebraic form as $\mathcal K'_i \, u_i = f_i$, we can write
	\begin{equation*}
		\mathcal K'_i = 
		\begin{bmatrix}
			K'_{i, \, II}			    & K'_{i, \, I\Gamma'} \\
			K'_{i, \, \Gamma' I} 	    & K'_{i, \, \Gamma' \Gamma'}
		\end{bmatrix}.
	\end{equation*}
	We observe that the matrix $\mathcal K$ from (\ref{eq: global algebraic system}), obtained by assembling the matrices $\mathcal K'_i$, for $i=0,\dots,N$, from $W(\Omega')$ to $\widetilde W(\Omega')$, is not block diagonal, due to the couplings between variables at the corners $\mathcal V_i'$.
	
	A very simple situation with only three subdomains (one extracellular plus two cells) is depicted in Figure~\ref{fig: enumeration for matrix}. Denote with $\Omega_0$ the extracellular domain and with $\Omega_1$, $\Omega_2$ the intracellular domains (blue, green and red colors respectively). 
	We consider the nodes on $\partial \Omega$ as interior to $\Omega_0$, since we will consider Neumann boundary conditions on $\partial \Omega$.
	The local enumeration of degrees of freedom easily reads as
	\begin{align*}
	    \Xi_{I_0}         &= \{ 1,\dots,13, 21, \dots,24, 27,\dots,30, 33,\dots,36, 44,\dots,56 \} \\
	    \Xi_{\Gamma_0}    &= \{ 14,\dots, 20,25, 26, 31, 32, 37, \dots, 43 \} \\
	    \Xi_{I_1}         &= \{ 62, 63, 66, 67 \} \\
	    \Xi_{\Gamma_1}    &= \{ 57,\dots,61, 64, 65, 68,\dots,72 \} \\
	    \Xi_{I_2}         &= \{ 78, 79, 82, 83 \} \\
	    \Xi_{\Gamma_2}    &= \{ 73,\dots,77, 80, 81, 84,\dots,88 \} 
	\end{align*}
	where $\Xi_{I_i}$ and $\Xi_{\Gamma_i}$ denotes the set of the indices of degrees of freedom interior and on the interface of subdomain $\Omega_i$ respectively.
	
	\begin{figure}[!ht]
	    \centering
	    \includegraphics[scale=.35]{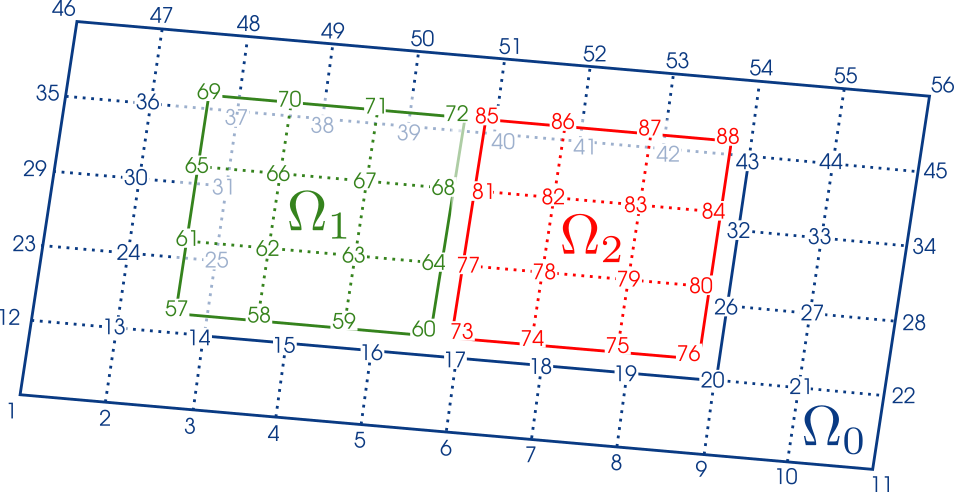}
	    \caption{Example of global enumeration of degrees of freedom for DG-type discretizations, in a non-overlapping domain decomposition setting. Simple case of three subdomains: extracellular $\Omega_0$ in blue, intracellular $\Omega_1$ and $\Omega_2$ in green and red.}
	    \label{fig: enumeration for matrix}
	\end{figure}
	In this example (we suppose $\tau = 1$ to simplify the notation) the matrix $\mathcal{K}$ is given by
	\begin{equation*}
	    \renewcommand\arraystretch{1.2}
    	\setlength{\dashlinedash}{0.9pt}
        \setlength{\dashlinegap}{4.5pt}
	    \mathcal{K} = 
	    \left[
	    \begin{array}{cc:cc:cc}
	        A_{I_0 I_0}         & A_{I_0 \Gamma_0}                                  & & & & \\
	        A_{\Gamma_0 I_0}    & A_{\Gamma_0 \Gamma_0} + M_{\Gamma_0 \Gamma_0}     & & -M_{\Gamma_0 \Gamma_1}      & & -M_{\Gamma_0 \Gamma_2} \\
	        \hdashline
	                            &                                                   & A_{I_1 I_1}       & A_{I_1 \Gamma_1}  & & \\
	                            & -M_{\Gamma_1 \Gamma_0}                            & A_{\Gamma_1 I_1}  & A_{\Gamma_1 \Gamma_1} + M_{\Gamma_1 \Gamma_1}   & & -M_{\Gamma_1 \Gamma_2}  \\
	       \hdashline
	                            & & &       & A_{I_2 I_2}         & A_{I_2 \Gamma_2}    \\
	                            & -M_{\Gamma_2 \Gamma_0}    &   & -M_{\Gamma_2 \Gamma_1}                            & A_{\Gamma_2 I_2}  & A_{\Gamma_2 \Gamma_2} + M_{\Gamma_2 \Gamma_2}
	    \end{array}
	    \right],
	\end{equation*}
	where we have highlighted the partition of degrees of freedom, thus resulting in a reordering of the entries of the local stiffness and mass matrices. We underline that the contribution of the mass term enters into account only on the interfaces, see Eq. (\ref{eq: a_i p_i}). Moreover, the mass matrix 
    $
        M_{\Gamma_0 \Gamma_0} 
    $
    is given by the sum of two contributions $M_{\Gamma_0 \Gamma_0}^{01}$ and $M_{\Gamma_0 \Gamma_0}^{02}$, obtained from the integration on the common boundary between $\Omega_0$ and $\Omega_1$ and between $\Omega_0$ and $\Omega_2$, respectively. Thus, 
    \begin{equation*}
        M_{\Gamma_0 \Gamma_0} = M_{\Gamma_0 \Gamma_0}^{01} + M_{\Gamma_0 \Gamma_0}^{02}.
    \end{equation*}
    Analogously, 
    \begin{equation*}
        M_{\Gamma_1 \Gamma_1} = M_{\Gamma_1 \Gamma_1}^{10} + M_{\Gamma_1 \Gamma_1}^{12}
        \qquad
        \text{and}
        \qquad
        M_{\Gamma_2 \Gamma_2} = M_{\Gamma_2 \Gamma_2}^{20} + M_{\Gamma_2 \Gamma_2}^{21}.
    \end{equation*}
    The local contributions instead, are given by
	\begin{equation*}
	    \renewcommand\arraystretch{1.2}
    	\setlength{\dashlinedash}{0.9pt}
        \setlength{\dashlinegap}{4.5pt}
	    K'_0 = 
	    \left[
	    \begin{array}{cc:cr:cr}
	        A_{I_0 I_0}         & A_{I_0 \Gamma_0}                                  & & & & \\
	        A_{\Gamma_0 I_0}    & A_{\Gamma_0 \Gamma_0} + \frac{1}{2} M_{\Gamma_0 \Gamma_0}     & & -\frac{1}{2} M_{\Gamma_0 \Gamma_1}      & & -\frac{1}{2} M_{\Gamma_0 \Gamma_2} \\
	        \hdashline
	                            &      & \color{white} A_{I_1 I_1}       & \color{white} A_{I_1 \Gamma_1}  & & \\
	                            & -\frac{1}{2} M_{\Gamma_1 \Gamma_0}                            & \color{white} A_{\Gamma_1 I_1}  & {\color{white} A_{\Gamma_1 \Gamma_1} + } \frac{1}{2} M_{\Gamma_1 \Gamma_1}^{10}   & & \color{white} -M_{\Gamma_1 \Gamma_2}  \\
	       \hdashline
	                            & & &       & \color{white} A_{I_2 I_2}         & \color{white} A_{I_2 \Gamma_2}    \\
	                            & -\frac{1}{2} M_{\Gamma_2 \Gamma_0}    &   & \color{white} -M_{\Gamma_1 \Gamma_2}                            & \color{white} A_{\Gamma_2 I_2}  & {\color{white} A_{\Gamma_2 \Gamma_2} + }\frac{1}{2} M_{\Gamma_2 \Gamma_2}^{20}
	    \end{array}
	    \right],
	\end{equation*}
	\begin{equation*}
	    \renewcommand\arraystretch{1.2}
    	\setlength{\dashlinedash}{0.9pt}
        \setlength{\dashlinegap}{4.5pt}
	    K'_1 = 
	    \left[
	    \begin{array}{cr:cc:cr}
	         \color{white}A_{I_0 I_0}         &  \color{white}A_{I_0 \Gamma_0}                            & & & & \\
	         \color{white}A_{\Gamma_0 I_0}    & { \color{white}A_{\Gamma_0 \Gamma_0} + } \frac{1}{2} M_{\Gamma_0 \Gamma_0}^{01}     & & -\frac{1}{2} M_{\Gamma_0 \Gamma_1}      & & \color{white} -M_{\Gamma_0 \Gamma_2} \\
	        \hdashline
	                            &                                                   & A_{I_1 I_1}       & A_{I_1 \Gamma_1}  & & \\
	                            & -\frac{1}{2} M_{\Gamma_1 \Gamma_0}                            & A_{\Gamma_1 I_1}  & A_{\Gamma_1 \Gamma_1} + \frac{1}{2} M_{\Gamma_1 \Gamma_1}   & & -\frac{1}{2} M_{\Gamma_1 \Gamma_2}  \\
	       \hdashline
	                            & & &       & \color{white} A_{I_2 I_2}         & \color{white} A_{I_2 \Gamma_2}    \\
	                            & \color{white} -M_{\Gamma_2 \Gamma_0}    &   & -\frac{1}{2} M_{\Gamma_2 \Gamma_1}                            & \color{white} A_{\Gamma_2 I_2}  & { \color{white} A_{\Gamma_2 \Gamma_2} +} \frac{1}{2} M_{\Gamma_2 \Gamma_2}^{21}
	    \end{array}
	    \right],
	\end{equation*}
	\begin{equation*}
	    \renewcommand\arraystretch{1.2}
    	\setlength{\dashlinedash}{0.9pt}
        \setlength{\dashlinegap}{4.5pt}
	    K'_2 = 
	    \left[
	    \begin{array}{cr:cr:cc}
	         \color{white}A_{I_0 I_0}         &  \color{white}A_{I_0 \Gamma_0}                                  & & & & \\
	         \color{white}A_{\Gamma_0 I_0}    & { \color{white}A_{\Gamma_0 \Gamma_0} + } \frac{1}{2} M_{\Gamma_0 \Gamma_0}^{02}     & &       & & -\frac{1}{2} M_{\Gamma_0 \Gamma_2} \\
	        \hdashline
	                            &                                                   & \color{white} A_{I_1 I_1}       & \color{white} A_{I_1 \Gamma_1}  & & \\
	                            & \color{white} -M_{\Gamma_1 \Gamma_0}                            & \color{white} A_{\Gamma_1 I_1}  & { \color{white}A_{\Gamma_1 \Gamma_1} + } \frac{1}{2} M_{\Gamma_1 \Gamma_1}^{12}   & & -\frac{1}{2} M_{\Gamma_1 \Gamma_2}  \\
	       \hdashline
	                            & & &       & A_{I_2 I_2}         & A_{I_2 \Gamma_2}    \\
	                            & -\frac{1}{2} M_{\Gamma_2 \Gamma_0}    &   & -\frac{1}{2} M_{\Gamma_2 \Gamma_1}                            & A_{\Gamma_2 I_2}  & A_{\Gamma_2 \Gamma_2} + \frac{1}{2} M_{\Gamma_2 \Gamma_2}
	    \end{array}
	    \right].
	\end{equation*}
	The {\it static condensation} procedure, which consists in eliminating the interior degrees of freedom, reduces the problem to one on the interface $\Gamma'$, where the local Schur complement systems are given by
	\begin{equation*}
		S'_i := K'_{i, \,\Gamma' \Gamma'} - K'_{i, \, \Gamma I} (K'_{i, \, II})^{-1}  	 K'_{i, \, I\Gamma'}.
	\end{equation*}
	By defining the unassembled Schur complement matrix $S' = \text{diag } \left[ S'_0, \dots, S'_N \right]$, we obtain the global Schur complement matrix $\widehat{S}_{\Gamma'} = R_{\Gamma'}^T S' R_{\Gamma'}$, where $R_{\Gamma'}$ is the direct sum of local restriction operators $R_{\Gamma'}^{(i)}$ returning the local interface components,
    $ R_{\Gamma'}^{(i)}: W(\Omega') \rightarrow W_i(\Omega_i')$.
	Thus, instead of solving system (\ref{eq: glob sys}), we solve the Schur complement system 
	\begin{equation}\label{schursys}
		\widehat{S}_{\Gamma'} u_{\Gamma'} = \widehat{f}_{\Gamma'}, 
	\end{equation}
	where $\widehat{f}_{\Gamma'}$ is retrieved from the right-hand-side of (\ref{eq: glob sys}).
	Once this problem is solved, the solution $u_{\Gamma'}$ on the interface $\Gamma'$ is used to recover the solution on the internal degrees of freedom
    \begin{equation*}
	u_{i, \, I} = (K'_{i, \,II})^{-1} \left( f_{i, \, I}  - K'_{i, \, I \Gamma'} u_{ \Gamma'} \right).
    \end{equation*}
	We define the local discrete harmonic part in the sense of $d_i$ as defined in~\eqref{eq: widehat a_i} as follows.
	\begin{definition}
		The discrete harmonic extension $\mathcal H'_i$ in the sense of $d_i$ is defined as:
		\begin{equation*}
			\mathcal H'_i : W_i(\Gamma') \longrightarrow W_i(\Omega'_i), \qquad 
			\begin{cases}
				d_i \left( \mathcal H'_i u_{i, \, \Gamma'}, \vi \right) = 0 		&\forall \vi\in W_i(\Omega'_i) 		\\
				\mathcal H'_i u_{i, \, \Gamma'} = \uii 									&\text{on } \partial \Omega_i	\\
				\mathcal H'_i u_{i, \, \Gamma'} = \uij 								 	&\text{on } E_{ji} \subset \partial \Omega_j
			\end{cases}
		\end{equation*}
	\end{definition}
As in the case of standard elliptic problems (see Ref. \cite{toselli2006domain}), the Schur bilinear form can be defined through the action of the Schur matrix and the bilinear form 
	\begin{equation*}
		d_i (\mathcal H'_i u_{i, \, \Gamma'} , \mathcal H'_i v_{i, \, \Gamma'} ) :=  v_{i, \, \Gamma'}^T S'_i u_{i, \, \Gamma'} = s'_i (u_{i, \, \Gamma'},v_{i, \, \Gamma'}).
	\end{equation*}
	The bilinear form $s'_i(\cdot, \cdot)$ inherits the symmetry and coercivity from the definition of $S'_i$. Furthermore, thanks to Lemma \ref{lemma: ellipticity a_i}, it is possible to bound the energies related to the local Schur complements, 
    \begin{equation}\label{schurminform}
        s'_i (u_{i, \, \Gamma'}, u_{i, \, \Gamma'}) = \min_{v^{(i)}_{| \partial \Omega_i \cap \Gamma'} =  u_{i, \, \Gamma'} } d_i (v^{(i)},v^{(i)})
    \end{equation}
	which allows us to work with discrete harmonic extensions instead of functions defined only on $\Gamma'$.  \\
	
	\paragraph{\bf Restriction operators and scaling}
	Lastly, we need to introduce the restriction operators 
	\begin{eqnarray}
		R_{i, \, \Delta}: W_\Delta(\Gamma') \rightarrow W_{i, \,\Delta} (\Gamma'_i),  		&\qquad R_{\Gamma' \Delta}: W (\Gamma') \rightarrow W_\Delta (\Gamma'), 	\nonumber \\
		R_{i, \, \Pi}: \widehat W_\Pi (\Gamma') \rightarrow W_{i, \, \Pi} (\Gamma'_i),  	 	&\qquad R_{\Gamma' \Pi}: W (\Gamma') \rightarrow \widehat W_\Pi (\Gamma'), 	\nonumber
	\end{eqnarray}
	and the direct sums $R_\Delta = \oplus R_{i,\, \Delta}$, $R_\Pi = \oplus R_{i,\, \Pi} $ and $\widetilde{R}_{\Gamma'} = R_{\Gamma' \Pi} \oplus R_{\Gamma' \Delta}$, which maps $W (\Gamma') $ into $\widetilde W (\Gamma')$. 
	We consider a proper scaling of the dual variables. \\
	{\bf $\rho$-scaling.} The $\rho$-scaling is defined for the cell-by-cell model at each node $x \in \Gamma_i$ through the pseudoinverses
	\begin{equation} \label{eq: pseudoinv}
		\pseudoinv_i (x) := \dfrac{\sigma_i}{\sumjNx \sigma_j} 
	\end{equation}
	where $\mathcal N_x$ denotes the set of indexes of subdomains with $x$ in the closure of the subdomain.
	We observe that $\mathcal N_x$  induces the definition of an equivalence relation that allows the classification of interface degrees of freedom into faces, edges and vertices equivalence classes.  
Recall that the following inequality holds true (see \cite{toselli2006domain}):
\begin{equation}\label{dis_pseu}
\sigma_i (\pseudoinv_i(x))^2 \leq \min{\{\sigma_i,\sigma_j\}} \quad \forall j\in\mathcal N_x.
\end{equation} 
	We define the scaling matrix for each subdomain $\Omega_i$ as 
    \begin{equation}\label{eq: scaling matrices}
    	D_i = \text{diag } 
    		\{ \pseudoinv_i \}_{i=0}^N \,,
    \end{equation}
    i.e. the $i$-th diagonal scaling matrix $D_i$ contains the pseudo-inverses (\ref{eq: pseudoinv}) along the diagonal. 
	
	We finally define the scaled local restriction operators
	\begin{equation*}
		R_{i, \, D, \Gamma'} := D_i R_{i, \, \Gamma'}, 	\qquad 	\qquad R_{i, \, D, \Delta} := R_{i, \, \Gamma' \Delta} R_{i, \, D, \Gamma'} \, ,
	\end{equation*}
	$R_{D, \Delta}$ as direct sum of $R_{i, \, D, \Delta}$ and the global scaled operator 	$\widetilde{R}_{D, \Gamma'} = R_{\Gamma' \Pi} \oplus R_{D, \Delta} R_{\Gamma' \Delta}$.

	\subsection{BDDC preconditioner}
	BDDC is a two-level preconditioner for the Schur complement system $\widehat{S}_{\Gamma'} u_{\Gamma'} = \widehat{f}_{\Gamma'}$.
	If we partition the degrees of freedom of the interface $\Gamma'$ into those internal ($I$) and those dual ($\Delta$), the matrix $\mathcal K'_i$ can be written as
	\begin{equation*}
		\mathcal K'_i = 
		\begin{bmatrix}
			K'_{i, \, II} 					& K'_{i, \, I\Gamma'} \\
			K'_{i, \, \Gamma' I}  		    & K'_{i, \, \Gamma' \Gamma'}
		\end{bmatrix}
		=
		\begin{bmatrix}
			K'_{i, \, II}			& K'_{i, \, I \Delta} 			& K'_{i, \, I \Pi} \\
			K'_{i, \, \Delta I} 	& K'_{i, \, \Delta \Delta} 		& K'_{i, \, \Delta \Pi}	\\
			K'_{i, \, \Pi I} 		& K'_{i, \, \Pi \Delta}			& K'_{i, \, \Pi \Pi}
		\end{bmatrix}.
	\end{equation*} 
	It is possible to define the BDDC preconditioner using the restriction operators as 
	\begin{equation*}
		M^{-1}_\text{BDDC} = \widetilde{R}_{D, \Gamma'}^T (\widetilde S_{\Gamma'})^{-1}  \widetilde{R}_{D, \Gamma'}, 	\qquad 		\widetilde S_{\Gamma'} = \widetilde{R}_{\Gamma'} S' \widetilde{R}_{\Gamma'}^T,
	\end{equation*}
	where the action of the inverse of $\widetilde{S}_{\Gamma'}$ can be evaluated with a block-Cholesky elimination procedure
	\begin{equation*}
		\widetilde S_{\Gamma'}^{-1} = \widetilde{R}_{\Gamma \Delta }^T \left( \sum_{i=0}^{N} 
		\begin{bmatrix}
			0 		&R_{i, \, \Delta}^{T}
		\end{bmatrix}
		\begin{bmatrix}
			K'_{i, \, II} 					& K'_{i, \, I \Delta} 			  \\
			K'_{i, \, \Delta I}  			& K'_{i, \, \Delta \Delta}
		\end{bmatrix}^{-1}
		\begin{bmatrix}
			0 	\\		R_{i, \, \Delta}
		\end{bmatrix}
		\right) \widetilde{R}_{\Gamma' \Delta } + \varPhi S_{\Pi \Pi}^{-1} \varPhi .
	\end{equation*}
	In this way, the first term is the sum of local solvers on each substructure $\Omega'_i$, while the latter is a coarse solver for the primal variables, where
	\begin{equation*}
		\begin{split}
		\varPhi & =  R_{\Gamma' \Pi}^T - R_{\Gamma' \Delta}^T \sum_{i=0}^{N}
		\begin{bmatrix}
			0 		&R_{i, \Delta}^{T}
		\end{bmatrix}
		\begin{bmatrix}
			K'_{i, \, II} 					& K'_{i, \, I \Delta} 			  \\
			K'_{i, \, \Delta I}  			& K'_{i, \, \Delta \Delta}
		\end{bmatrix}^{-1}
		\begin{bmatrix}
			K'_{i, \, I \Pi} 	\\		R_{i, \, \Delta \Pi}
		\end{bmatrix}
		R_{i, \, \Pi} ,\\
		S_{\Pi \Pi } & =  \sum_{i=0}^{N}  R_{i, \, \Pi}^T 
		\left( K'_{i, \, \Pi \Pi} - 
		\begin{bmatrix}
			K'_{i, \, \Pi I}		    & K'_{i, \, \Pi \Delta}   
		\end{bmatrix}
		\begin{bmatrix}
			K'_{i, \, II} 				& K'_{i, \, I \Delta} 			  \\
			K'_{i, \, \Delta I} 		& K'_{i, \, \Delta \Delta}
		\end{bmatrix}^{-1}
		\begin{bmatrix}
			K'_{i, \, I \Pi} 	\\		R_{i, \, \Delta \Pi}
		\end{bmatrix}
		\right) R_{i, \, \Pi}.
	\end{split}
	\end{equation*}
	
	\subsection{Technical results}
	We observe first that the bilinear form $d_h(\cdot, \cdot)$ induces a broken norm on $V(\Omega)$:
	\begin{equation*}
		\| u \|^2_h = d_h(u,u) = \sumN \left\{ \tau \sigma_i \normL{\grad u_i}{\Omega_i}^2 + \dfrac{1}{2} \sumEij \int_{\Eij} C_m (u_i - u_j)^2 \right\} .
	\end{equation*}
	Furthermore, it is possible to prove the following Lemma.
	\begin{lemma}\label{lemma: ellipticity a_i}
		Consider the bilinear form $d_i(u,v) = \tau \, a_i(u,v) + p_i(u,v),$ where $a_i(\cdot, \cdot)$ and $p_i(\cdot, \cdot)$ have been defined in (\ref{eq: a_i p_i}). The following bounds hold
		\begin{align*}
			d_i (u_i, u_i) &\leq \tau \sigma_M \seminormH{u_i}{i}^2 + \sumEij \frac{C_m}{2} \normL{u_i - u_j}{\Eij}^2,	\\
			d_i (u_i, u_i) &\geq \tau \sigma_m \seminormH{u_i}{i}^2 + \sumEij \frac{C_m}{2} \normL{u_i - u_j}{\Eij}^2 ,
		\end{align*}
		for all $u_i \in V_i(\Omega_i)$, with $\sigma_m$ and $\sigma_M$ the minimum and maximum values of $\sigma_i$ over all the subdomains.
	\end{lemma}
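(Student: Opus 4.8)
The plan is to evaluate $d_i(u_i, u_i)$ directly from its definition in \eqref{eq: widehat a_i} and to treat the diffusion contribution $\tau a_i$ and the membrane (face) penalty $p_i$ separately. Setting $\phi_i = u_i$ in \eqref{eq: a_i p_i} gives
\begin{equation*}
d_i(u_i, u_i) = \tau \int_{\Omega_i} \sigma_i \grad u_i \grad u_i \, dx + \frac{1}{2}\sumFij \int_{\Fij} C_m (u_i - u_j)^2 \, ds.
\end{equation*}
Since $\jump{u}_{ij} = u_i - u_j$, the second summand equals $\sumFij \frac{C_m}{2}\normL{u_i - u_j}{\Fij}^2$ exactly; this term is therefore identical in the upper and the lower estimate and requires no bounding at all.

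For the diffusion term I would invoke the uniform spectral bounds on the conductivity tensor. Assuming $\sigma_i$ is symmetric positive definite with eigenvalues contained in $[\sigma_m, \sigma_M]$ uniformly over all subdomains, the integrand obeys the pointwise inequalities $\sigma_m |\grad u_i|^2 \leq \sigma_i \grad u_i \grad u_i \leq \sigma_M |\grad u_i|^2$. Integrating over $\Omega_i$ and using $\seminormH{u_i}{i}^2 = \normL{\grad u_i}{\Omega_i}^2$ yields
\begin{equation*}
\sigma_m \seminormH{u_i}{i}^2 \leq a_i(u_i, u_i) \leq \sigma_M \seminormH{u_i}{i}^2.
\end{equation*}

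Multiplying through by $\tau > 0$ and adding the exact face contribution identified above produces both inequalities of the statement at once. I do not expect a genuine obstacle here: the result reduces to the elementary spectral bound for a quadratic form, together with the observation that the penalty term $p_i$ is itself a sum of squares and hence reproduced unchanged in both directions. The only point deserving care is to state the bounds on $\sigma_i$ as uniform across all subdomains, so that the single pair of constants $\sigma_m$, $\sigma_M$ controls every local form; this is consistent with the broken-norm identity displayed immediately before the lemma, where $\sigma_i$ is likewise pulled out as a scalar factor.
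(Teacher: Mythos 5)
Your argument is correct and is precisely the route the paper takes: the paper dispatches this lemma in one sentence by invoking the continuity and coercivity of the standard Laplacian form $a_i$, which is exactly your pointwise spectral bound $\sigma_m |\grad u_i|^2 \leq \sigma_i \grad u_i \cdot \grad u_i \leq \sigma_M |\grad u_i|^2$ integrated over $\Omega_i$, while the membrane term $p_i(u_i,u_i)$ appears unchanged on both sides. Your write-up simply makes explicit what the paper leaves implicit, including the correct observation that only the diffusion term requires any estimation.
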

	
	This result can be easily obtained by considering the continuity and coercivity of the standard Laplacian bilinear form $a_i$.
	
	\begin{remark}
		We will write $A \lesssim B$ whenever $A \leq c B$ with $c$ a constant independent of the various parameters of the problem (e.g., the subdomain diameter $H$, the mesh size $h$, the time step $\tau$ and the conductivity coefficients); similarly, we will write $A \sim B$ whenever it holds $A \lesssim B$ and $B \lesssim A$.
	\end{remark}
	
	We assume that the cardiac domain $\Omega_i \in \mathbb R^2$ is a bounded domain with Lipschitz continuous boundary; moreover, suppose that $\Gamma_i \subset \partial \Omega_i$ has non-vanishing two-dimensional measure and is relatively open with respect to $\partial \Omega_i$. 
	We will work with Sobolev spaces, considering mainly the space $H^{1/2}(\Gamma_i)$ of functions $u \in L^2(\Gamma_i)$ with finite semi-norm $| u |_{H^{1/2}(\Gamma_i)} < \infty$, where
	\begin{equation*}
		\seminormHhalf{u}{\Gamma_i}^2 = \iint_{\Gamma_i} \dfrac{|u(x) - u(y)|^2}{|x-y|^2} ds_x \, ds_y,
	\end{equation*}
	with $ds_\ast$ the differential of the arc-length along $\ast$-direction, and the associated norm
	\begin{equation*}
		\| u \|_{H^{1/2} (\Gamma_i)}^2 = \normL{u}{\Gamma_i}^2 + \seminormHhalf{u}{\Gamma_i}^2.
	\end{equation*}
	We define the space
	\begin{equation*}
		H^{1/2}_{00} (\Gamma_i) = \left\{ u \in H^{1/2}(\Gamma_i): \, \mathcal E_\text{ext} u \in H^{1/2} (\partial \Omega_i) \right\},
	\end{equation*}
	as the set of functions for which the zero extension from $\Gamma_i$ to $\partial \Omega_i$ by the extension operator $\mathcal E_\text{ext}$ 
	\begin{equation*}
		\mathcal E_\text{ext}: \Gamma_i \longrightarrow \partial \Omega_i 
		\qquad
		\mathcal E_\text{ext} u = 
		\begin{cases}
			0			&\text{on } \partial \Omega_i \backslash \Gamma_i,	\\
			u			&\text{on } \Gamma_i.
		\end{cases}
	\end{equation*}
	
	We report here a theoretical result that will be employed in the convergence analysis. This result can be found in \cite[Lemma 3.3]{widlund1988iterative}.

    \begin{lemma} \label{lemma: edge estimate}
        Let $u$ be a finite element, discrete harmonic function on a substructure $\Omega_i$. Consider the function $u - I_H u$, where $I_H u$ is the piecewise linear interpolant of $u$ on a coarse triangulation. Then, it holds
        \begin{equation*}
            \sum_{j \in \mathcal E^0_i} \| u - I_H u \|^2_{H^{1/2}_{00} (\Eij)} \leq C \left( 1 + \log \dfrac{H}{h} \right)^2 |u|^2_{H^1(\Omega_i)}
        \end{equation*}
        or, equivalently,
        \begin{equation*}
            \sum_{j \in \mathcal E^0_i} \| u - I_H u \|^2_{H^{1/2}_{00} (\Eij)} \leq C \left( 1 + \log \dfrac{H}{h} \right)^2 \seminormHhalf{u_{|_{\Gamma_i}}}{\Gamma_i}^2.
        \end{equation*}
    \end{lemma}
	
	
	Additionally, we recall some classical and useful theoretical results that will be considered throughout the proof. They can be found in Appendix A of \cite{toselli2006domain}.
	
	\begin{lemma}\label{lemma: 3.1}
		Let $\Gamma_i \subset \partial \Omega_i$. There exists two constants, such that for $u \in H^{1/2}_{00} (\Gamma_i)$
		\begin{equation*}
			c \| \mathcal E_\text{ext} u \|^2_{H^{1/2} (\partial \Omega_i)} \leq \normHzero{u}{\Gamma_i}^2 \leq C \| \mathcal E_\text{ext} u \|^2_{H^{1/2} (\partial \Omega_i)} .
		\end{equation*}
	\end{lemma}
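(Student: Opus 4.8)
The plan is to derive the equivalence directly from the integral representations of the two norms, reducing everything to a single pointwise estimate on a geometric weight. Recall that the intrinsic norm on $H^{1/2}_{00}(\Gamma_i)$ is
\[
\normHzero{u}{\Gamma_i}^2 = \normL{u}{\Gamma_i}^2 + \seminormHhalf{u}{\Gamma_i}^2 + \int_{\Gamma_i} \frac{|u(x)|^2}{\rho(x)}\, ds_x,
\]
where $\rho(x) := \operatorname{dist}(x,\partial\Gamma_i)$ and $\partial\Gamma_i$ is the relative boundary of $\Gamma_i$ in $\partial\Omega_i$ (in two dimensions, the endpoints of the arc $\Gamma_i$). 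First I would write $\widetilde u = \mathcal E_\text{ext} u$ and split the double integral defining $\seminormHhalf{\widetilde u}{\partial\Omega_i}^2$ according to the partition $\partial\Omega_i = \Gamma_i \cup (\partial\Omega_i \setminus \Gamma_i)$ into the four blocks $\Gamma_i\times\Gamma_i$, $(\partial\Omega_i\setminus\Gamma_i)^2$, and the two symmetric cross blocks. Since $\widetilde u$ vanishes on $\partial\Omega_i\setminus\Gamma_i$, the second block is zero, the first reproduces exactly $\seminormHhalf{u}{\Gamma_i}^2$, and each cross block collapses to $\int_{\Gamma_i}|u(x)|^2 w(x)\,ds_x$ with weight $w(x) := \int_{\partial\Omega_i\setminus\Gamma_i} |x-y|^{-2}\, ds_y$. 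Combined with $\|\widetilde u\|_{L^2(\partial\Omega_i)}^2 = \normL{u}{\Gamma_i}^2$, this gives the exact identity
\[
\| \mathcal E_\text{ext} u \|^2_{H^{1/2}(\partial\Omega_i)} = \normL{u}{\Gamma_i}^2 + \seminormHhalf{u}{\Gamma_i}^2 + 2\int_{\Gamma_i} |u(x)|^2\, w(x)\, ds_x.
\]

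Comparing this identity with the definition of $\normHzero{u}{\Gamma_i}^2$, the $L^2$ and seminorm terms coincide term by term, so the whole lemma reduces to the two-sided weight estimate
\[
w(x) \;\sim\; \frac{1}{\rho(x)}, \qquad x \in \Gamma_i;
\]
once this holds, monotonicity of the weighted integral in the nonnegative density $|u(x)|^2$, together with the elementary fact that $A+B_1 \sim A+B_2$ whenever $B_1 \sim B_2$ and $A \ge 0$, delivers both inequalities. To prove the weight estimate I would localize. Near an endpoint $P \in \partial\Gamma_i$ I parametrize $\Gamma_i$ and its complement by arc length measured from $P$; using the Lipschitz regularity of $\partial\Omega_i$ to bound $|x-y|$ from above and below by constant multiples of the sum of arc-length distances, I obtain $w(x) \sim \int_0^{c}(\rho(x)+t)^{-2}\, dt \sim \rho(x)^{-1}$ as $x \to P$. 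Away from $\partial\Gamma_i$, $\rho(x)$ is bounded below by a positive constant depending only on $\Gamma_i$, and $w(x)$ is likewise bounded above and below, so the equivalence is trivial there; a compactness argument patches the two regimes into a global bound.

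The hard part will be the weight estimate near the relative boundary $\partial\Gamma_i$: comparing $|x-y|$ with the arc-length distance requires the Lipschitz hypothesis on $\partial\Omega_i$ and a careful bookkeeping of constants so that they depend only on the geometry of $\Gamma_i \subset \partial\Omega_i$ and not on $\rho(x)$ itself. Once the comparison $w \sim 1/\rho$ is secured uniformly, the remainder is an immediate consequence of the exact identity above, and the constants $c$ and $C$ in the statement inherit their dependence solely from this geometric estimate.
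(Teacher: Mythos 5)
The paper does not actually prove this lemma: it is quoted verbatim as a classical result from Appendix~A of Toselli--Widlund (it is essentially their Lemma~A.8), so there is no in-paper argument to compare against. Judged on its own, your proof is the standard one and is correct in outline. The exact identity obtained by splitting the double integral into the four blocks is right (the factor $2$ on the cross term included), it correctly reduces the equivalence to the pointwise weight estimate $w(x)\sim 1/\rho(x)$, and the local computation $\int_0^{c}(\rho+t)^{-2}\,dt\sim\rho^{-1}$ near an endpoint is the heart of the matter. Two caveats are worth making explicit. First, the argument (and indeed the lemma in this quantitative form) requires $\partial\Omega_i\setminus\Gamma_i$ to have positive one-dimensional measure: if $\Gamma_i=\partial\Omega_i$ then $w\equiv 0$, the weighted term in the intrinsic $H^{1/2}_{00}$ norm has no counterpart on the right-hand side, and the upper bound $\normHzero{u}{\Gamma_i}^2\leq C\|\mathcal E_\text{ext}u\|^2_{H^{1/2}(\partial\Omega_i)}$ fails unless one reinterprets $H^{1/2}_{00}$ as plain $H^{1/2}$; the paper's standing assumption that $\Gamma_i$ is a relatively open proper subset should be invoked here. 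Second, the comparison $|x-y|\sim$ arc-length distance is only a local statement near each endpoint of $\Gamma_i$; for pairs $(x,y)$ that are far apart in arc length you need a uniform lower bound on $|x-y|$, which holds for the polygonal subdomain boundaries used in this paper but requires a chord-arc type hypothesis for general Lipschitz curves. You flag this as ``the hard part,'' which is the right instinct; with those two points made precise the proof is complete and matches the classical argument the paper is implicitly relying on.
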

	
	\begin{theorem}[Trace theorem]
		Let $\Omega_i$ be a polyhedral domain. Then,
		\begin{equation*}
			\seminormHhalf{u}{\Gamma_i} \sim \seminormH{\mathcal H^\Delta_i u_\Gamma}{i}^2.
		\end{equation*}
	\end{theorem}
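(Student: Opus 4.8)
The plan is to prove the two-sided bound inequality by inequality, exploiting the fact that $\mathcal H_i^\Delta$ is the \emph{energy-minimizing} extension. By Definition~\ref{def:harmonic-extension}, $\mathcal H_i^\Delta u_\Gamma$ is the weak solution of $-\Delta u = 0$ in $\Omega_i$ with trace $u_\Gamma$ on $\Gamma_i$ and $0$ on $\partial\Omega_i\setminus\Gamma_i$, and hence coincides with the minimizer of the Dirichlet energy over all competitors sharing that boundary trace,
\[
\seminormH{\mathcal H_i^\Delta u_\Gamma}{i}^2 \;=\; \min_{\substack{v|_{\Gamma_i}=u_\Gamma\\ v|_{\partial\Omega_i\setminus\Gamma_i}=0}} \seminormH{v}{i}^2 ,
\]
which is the continuous counterpart of the discrete identity already recorded in~\eqref{schurminform}. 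The statement therefore reduces to comparing this minimal energy with the boundary (semi)norm of the datum $u=u_\Gamma$ on $\Gamma_i$.

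For the upper bound, I would exhibit a single admissible competitor whose energy is controlled. Applying the classical bounded extension (inverse-trace) operator $\mathcal E: H^{1/2}(\partial\Omega_i)\to H^1(\Omega_i)$ of the Lipschitz polygon $\Omega_i$ to the zero extension $\mathcal E_\text{ext} u$ yields a function with the prescribed trace and $\seminormH{\mathcal E(\mathcal E_\text{ext} u)}{i}^2 \lesssim \|\mathcal E_\text{ext} u\|^2_{H^{1/2}(\partial\Omega_i)} \lesssim \normHzero{u}{\Gamma_i}^2$, where Lemma~\ref{lemma: 3.1} has been used to pass to the $H^{1/2}_{00}(\Gamma_i)$-norm. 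The minimizing property then transfers this bound to $\mathcal H_i^\Delta u_\Gamma$ itself.

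For the lower bound, I would invoke continuity of the trace map $\gamma: H^1(\Omega_i)\to H^{1/2}(\partial\Omega_i)$, apply it to $w=\mathcal H_i^\Delta u_\Gamma$, and restrict the resulting boundary norm to $\Gamma_i$, where $\gamma w = u$; since the harmonic extension carries no interior zero-order contribution beyond its trace, the full $H^1$ norm on the right may be replaced by the seminorm via a Poincar\'e-type argument adapted to the mixed Dirichlet datum, yielding $\seminormHhalf{u}{\Gamma_i}^2 \lesssim \seminormH{\mathcal H_i^\Delta u_\Gamma}{i}^2$.

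The step I expect to be the genuine obstacle is the bookkeeping of the boundary norm itself: the energy of a zero-extended datum is naturally equivalent to the $H^{1/2}_{00}(\Gamma_i)$-\emph{norm}, not to the plain seminorm $\seminormHhalf{u}{\Gamma_i}$ appearing in the statement, the two differing by the weighted term penalizing non-vanishing of $u$ near $\partial\Gamma_i$. Reconciling them relies on the assumption that $\Gamma_i$ is relatively open with non-vanishing measure in $\partial\Omega_i$ together with Lemma~\ref{lemma: 3.1}, and one must verify throughout that the equivalence constants depend only on the shape-regularity of the subdomains and not on $H$, $h$, $\tau$, or the conductivities; the underlying elliptic trace and extension estimates are otherwise the standard ones of \cite[Appendix A]{toselli2006domain}.
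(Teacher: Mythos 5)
The paper does not actually prove this statement: it is recalled as a classical result and deferred to Appendix~A of \cite{toselli2006domain}, so there is no in-paper argument to compare against. Your sketch follows the standard route used there (energy minimization plus a bounded extension operator for one inequality, trace continuity for the other), which is the right skeleton, but as written it has two genuine gaps.

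First, the competitor $\mathcal E(\mathcal E_\text{ext}u)$ produced by the continuous extension operator is not a finite element function, whereas $\mathcal H_i^\Delta u_\Gamma$ is the \emph{discrete} harmonic extension and minimizes the Dirichlet energy only over finite element functions with the prescribed boundary values; the continuous minimization identity you write down is not the one you are entitled to use. To close the argument you must map the continuous extension back into $V_i(\overline\Omega_i)$ by an $H^1$-stable quasi-interpolant that preserves the boundary data, with constants independent of $h$ --- this discrete extension step is precisely the nontrivial content of the corresponding lemma in \cite{toselli2006domain} and cannot be elided. Second, the discrepancy you flag between $\normHzero{u}{\Gamma_i}$ and $\seminormHhalf{u}{\Gamma_i}$ is not mere bookkeeping that shape-regularity reconciles: if $\Gamma_i\subsetneq\partial\Omega_i$ and zero data is imposed on the complement, a nonzero constant $u$ has $\seminormHhalf{u}{\Gamma_i}=0$ while $\seminormH{\mathcal H_i^\Delta u_\Gamma}{i}^2>0$, so the direction ``energy $\lesssim$ seminorm'' is simply false and the correct statement must carry the $H^{1/2}_{00}(\Gamma_i)$-norm. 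The seminorm version holds only when $\Gamma_i=\partial\Omega_i$ (the relevant case for the interior cells), and there the upper bound additionally requires subtracting the boundary average of $u$ before extending, exploiting that $\mathcal H_i^\Delta$ maps constants to constants; this quotient-over-constants step is absent from your argument. (As a side remark, the displayed equivalence in the statement is missing a square on its left-hand side; both your sketch and the cited reference work with squared quantities.)
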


	\section{Bound for projection operator}\label{sec: theoretical convergence}
	The main tool in the theory of dual-primal iterative substructuring methods is given by the projection operator $P_D: W (\Gamma') \rightarrow W (\Gamma')$, whose action on a given $u \in W (\Gamma')$ can be locally defined for this particular application as
	\begin{equation} \label{eq: projection P_D}
	\left( P_D u \right)_i (x) = 
	\left( \sumjNx \pseudoinv_j \left( \uii(x) - \uji(x) \right), \, \left\{ \pseudoinv_j \left( \uij(x) - \ujj(x) \right) \right\}_{j \in \mathcal E_i^0} \right),	
	\end{equation}
	for all $x \in \Gamma_i$, 
	where $\uii$ and $\ujj$ are the values of $u_i \in \Omega_i$ and $u_j \in \Omega_j$ on the boundaries $\Eij$ and $\Eji$, respectively, while $\uij$ and $\uji$ are the values of $u_i$ and $u_j$ on $\Eji$ and $\Eij$, respectively (see, e.g., Figure~\ref{fig: subdomain dofs}). 
    In other words, the operator $P_D$ can be viewed as the action of the scaling operator introduced through \eqref{eq: pseudoinv}. For the projection $E_D=I-P_D$, it holds
    \begin{align*}
        (E_D u)_{i,i} (x) &= (E_D u )_{j,i} (x) 
        \qquad
        \forall j \in \mathcal N_x \text{ and } \forall x \in \Gamma_i, \\
        (E_D u)_{i,j} (x) &= (E_D u )_{j,j} (x) 
        \qquad
        \forall j \in \mathcal E_i^0 \text{ and } \forall x \in \Gamma_i,
    \end{align*}
    i.e., $E_D$ maps onto function continuous across the BDDC splitting, but still possibly discontinuous across cell membranes.
    We refer to \cite{toselli2006domain} for further details.
	
	We focus on the estimate of the norm of this operator with respect to the seminorm $| \cdot |_{S'}^2$ defined as 
	\begin{equation*}
		| v |_{S'}^2 := \sumN \seminormS{v_i}{i}^2,
		\qquad
		\seminormS{v_i}{i}^2 := v_i^T S'_i v_i,
		\qquad
		\text{for } v_i \in W (\Gamma'_i).
	\end{equation*}	
	
	The core of the estimate relies on the following Lemma. 
	
	\begin{lemma}\label{lemma: projection lemma}
		Let the primal space $\widetilde W_\Pi (\Gamma')$ be spanned by the vertex nodal finite element functions and $h = \mathcal O(\tau)$. If the projection operator $P_D$ is scaled by the $\rho$-scaling, then
		 \begin{equation*}
		 	|P_D u |_{S'}^2 \leq C \, \varPsi (\tau, h) \left( 1 + \log \dfrac{H}{h} \right)^2  |u|_{S'}^2
		 \end{equation*}
		 holds for all $u \in \widetilde{W} (\Gamma')$
		 with $C$ constant independent of all the parameters of the problem and with 
\[
\varPsi(\tau, h) = 1+\frac{2\,\sigma_M\,\tau}{C_m\,h}.
\]
		 Here, $\sigma_M$ is the maximum value of $\sigma_i$ over all the subdomains, $\tau$ the time step size, and $h$ the mesh size.
	\end{lemma}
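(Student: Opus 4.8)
The plan is to reduce the global estimate to a sum of local, face-by-face contributions and to bound each of them through the minimum-energy characterization of the local Schur complement, combined with the face estimate of Lemma~\ref{lemma: face estimate}. First I would localize: by definition $|P_D u|_{S'}^2 = \sumN \seminormS{(P_D u)_i}{i}^2$, and by the minimization property \eqref{schurminform} each local term satisfies $\seminormS{(P_D u)_i}{i}^2 \le d_i(w^{(i)}, w^{(i)})$ for any $w^{(i)} \in W_i(\Omega_i')$ whose trace on $\Gamma_i'$ equals $(P_D u)_i$. Choosing $w^{(i)}$ to be the discrete harmonic extension (Definition~\ref{def:harmonic-extension}) of the prescribed face data and invoking Lemma~\ref{lemma: ellipticity a_i}, the local energy splits into a \emph{stiffness} contribution of order $\tau \sigma_M \seminormH{w^{(i)}}{i}^2$ and a \emph{mass} contribution of order $\sumFij \frac{C_m}{2} \normL{w^{(i)}_{i,i} - w^{(i)}_{i,j}}{\Fij}^2$. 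By \eqref{eq: projection P_D} the components of $(P_D u)_i$ are the $\rho$-scaled jumps $\pseudoinv_j(\uii - \uji)$ and $\pseudoinv_j(\uij - \ujj)$; since $u \in \widetilde W(\Gamma')$ is continuous at the corners $\mathcal V_i'$, each such jump vanishes at the endpoints $\partial \Fij$, so every component is a face function in $H^{1/2}_{00}(\Fij)$ to which Lemma~\ref{lemma: face estimate} applies.

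For the stiffness contribution I would pass, via the trace theorem, from $\seminormH{w^{(i)}}{i}^2$ to the boundary seminorm, and then apply Lemma~\ref{lemma: face estimate} to each scaled jump. This produces the logarithmic factor $(1+\log\frac{H}{h})^2$ and charges each jump, through the partition-of-unity cancellation provided by the $\rho$-scaling weights \eqref{eq: pseudoinv}, to the local energies of the two sides. The terms coming from the \emph{own} values $\uii$ are bounded by $\seminormH{u_i}{i}^2$, hence by $\frac{1}{\tau \sigma_m} |u|_{S'}^2$ through the coercivity lower bound of Lemma~\ref{lemma: ellipticity a_i}; together with the prefactor $\tau \sigma_M$ this yields the first summand $\tau \sigma_M \cdot \frac{1}{\tau \sigma_m}$ of $\varPsi(\tau, h)$.

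The delicate point, which I expect to be the main obstacle, is the treatment of the \emph{ghost} values $\uji$ and $\uij$. These enter the spaces $W_j(\Omega_j')$ and $W_i(\Omega_i')$ only through the mass terms $p_j$, $p_i$ of \eqref{eq: a_i p_i}, and are therefore \emph{not} controlled by any $H^1$ stiffness seminorm of the neighbour. To estimate their $H^{1/2}_{00}$ energy, required by the face estimate, one must revert from the $L^2$ face norm available in $|u|_{S'}^2$ to the fractional norm by a finite element inverse inequality, at the cost of a factor $\frac{1}{h}$. Combined with the prefactor $\tau \sigma_M$ of the projected stiffness energy and the reciprocal mass coefficient $\frac{1}{C_m}$, this produces exactly the second summand $\tau \sigma_M \cdot \frac{2}{C_m}\frac{1}{h}$ of $\varPsi(\tau, h)$, the factor two reflecting the two face copies $\Fij$ and $\Fji$ that each jump couples. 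The hypothesis $h = \mathcal O(\tau)$ is precisely what keeps this term balanced against the stiffness part, so that $\varPsi$ stays bounded.

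Finally I would assemble the estimates: the local bound for $\seminormS{(P_D u)_i}{i}^2$ is the sum of a stiffness piece bounded by $\tau\sigma_M\frac{1}{\tau\sigma_m}(1+\log\frac Hh)^2|u|_{S'}^2$ and a mass piece bounded by $\tau\sigma_M\frac{2}{C_m h}(1+\log\frac Hh)^2|u|_{S'}^2$; summing over all faces and subdomains and using shape-regularity, so that the cross-point sums $\sumjNx$ over $\mathcal N_x$ have uniformly bounded length, collects these into the claimed global bound $|P_D u|_{S'}^2 \le C\,\varPsi(\tau,h)(1+\log\frac Hh)^2|u|_{S'}^2$ with $C$ independent of $H$, $h$, $\tau$ and of the conductivities.
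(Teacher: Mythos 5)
Your proposal is correct and follows essentially the same route as the paper's proof: localization to subdomains, reduction to face contributions (the corner terms dropping out by primality/continuity in $\widetilde W(\Gamma')$), the ellipticity splitting of Lemma \ref{lemma: ellipticity a_i} into a stiffness and a mass part, the trace theorem plus the face estimate of Lemma \ref{lemma: face estimate} for the logarithmic factor, and — the key point, which you identify correctly — an inverse inequality converting the $H^{1/2}$ seminorm of the ghost values $\uji$, $\uij$ into the $L^2$ membrane jump controlled by the mass form, at the cost of the factor $\tfrac{1}{h}$ that produces the second summand of $\varPsi(\tau,h)$.
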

	
	\begin{proof}
	As usually done in the substructuring framework, it is sufficient to work only on the local bounds, since it holds that	
	$$|P_D u|_{S'}^2 = \sumN{ \seminormS{R_{\partial \Omega_i} P_D u }{i}^2 }. $$
	Consider a function $u \in \widetilde{W} (\Gamma')$; then the action of the projection operator can be written as (\ref{eq: projection P_D}),
	\begin{align*}
		\vi &= \left( P_D u \right)_i 
		= \left( \sumjNx I^h \pseudoinv_j \left( \uii - \uji \right), \, \left\{ I^h \pseudoinv_j \left( \uij - \ujj \right) \right\}_{j \in \mathcal E_i^0}\right)	\\
		&= \sumE I^h \left( \ThetaE \, v_i \right) + \sumV I^h \left( \ThetaV \, v_i \right)
	\end{align*}
	where $\Theta_\ast = (\theta_\ast^{i}, \theta_\ast^{j})$ is the characteristic finite element function associated to the edges or the vertices $\ast = \{ \mathcal E, \mathcal V \}$ and $I^h$ is the usual finite element interpolant
	\begin{equation*}
		I^h: \mathcal C^0 (\Omega'_i) \longrightarrow W_i(\Omega'_i),
		\qquad
		\forall i=0,\dots,N.
	\end{equation*}
	Since we have included the vertices in the primal space, the vertex term vanishes. Therefore, we have only to estimate the contributions given by the edges
	\begin{equation*}
		\seminormS{ \vi }{i}^2 \lesssim \sumE \seminormS{ I^h \left( \ThetaE \, \vi \right)}{i}^2.
	\end{equation*}
	Thus,
	\begin{align*}
		\seminormS{ I^h (\ThetaE \vi )}{i}^2 &= s'_i \left( I^h (\ThetaE \vi), \, I^h (\ThetaE \vi) \right)
			= d_i \left( \mathcal H'_i I^h (\ThetaE \vi), \, \mathcal H'_i I^h (\ThetaE \vi) \right)		\\
			&= \tau \sigma_i \seminormH{\mathcal H_i^\Delta I^h (\ThetaE \vi)}{i}^2 + \sumEij \frac{C_m}{2} \normL{I^h (\vi - \vj)}{\Eij}^2,\numberthis \label{eq: eqn_2_terms}
	\end{align*}
	where we used the ellipticity property of Lemma \ref{lemma: ellipticity a_i}.	We proceed by estimating each of the above terms separately.
    In order to apply Lemma \ref{lemma: edge estimate}, we define $w_i$ as the linear function on $\Eij$ that assumes the same values of $\uii$ and $\uji$ at the endpoints of $\Eij$ and $w_j$ as the linear function on $\Eji$ that assumes the same values of $\uij$ and $\ujj$ at the endpoints of $\Eji$.   
    Then, by applying a Trace theorem, Lemma \ref{lemma: 3.1}, (\ref{dis_pseu}) and Lemma \ref{lemma: edge estimate}
    \begin{align*}
      \sigma_i & \seminormH{\mathcal H_i^\Delta I^h (\ThetaE \vi)}{i}^2 \lesssim \sigma_i \seminormHhalf{I^h (\ThetaE \vi)}{\Gamma_i}^2 \\ 				
		      &\lesssim \sum_{j \in \mathcal E^0_i} \sigma_i \normHzero{I^h \ThetaE (\pseudoinv_j (\uii - \uji) )}{\Eij}^2 \\
              & \leq \sum_{j \in \mathcal E^0_i} \sigma_i \left( \pseudoinv_j  \right)^2 \normHzero{I^h \ThetaE (\uii - w_i + w_i - w_j + w_j - \ujj + \ujj - \uji)}{\Eij}^2
\\		
	    &\lesssim \sum_{j \in \mathcal E^0_i} \sigma_i \normHzero{I^h \ThetaE (\uii - w_i)}{\Eij}^2 + \sum_{j \in \mathcal E^0_i} \sigma_j \normHzero{I^h \ThetaE (\ujj - w_j)}{\Eij}^2 
\\
	    &+ \sum_{j \in \mathcal E^0_i} \sigma_i \left( \pseudoinv_j  \right)^2 \normHzero{I^h \ThetaE (\uji-w_i-(\ujj-w_j))}{\Eij}^2
\\  
            &\lesssim \left( 1 + \log \dfrac{H}{h} \right)^2 \left[\sigma_i | \mathcal H^\Delta_i \uii |_{H^1(\Omega_i')}^2 + \sigma_j | \mathcal H^\Delta_j \ujj |_{H^1(\Omega_j')}^2 \right]       
\\ 
            & + \sum_{j \in \mathcal E^0_i} \sigma_i \left( \pseudoinv_j  \right)^2 \normHzero{I^h \ThetaE (\uji-w_i-(\ujj-w_j))}{\Eij}^2
\\
	    &\lesssim \left( 1 + \log \dfrac{H}{h} \right)^2 \left[ \dfrac{1}{\tau} \seminormS{u_i}{i}^2 + \dfrac{1}{\tau} \seminormS{u_j}{j}^2 \right] 
\\
            & + \sum_{j \in \mathcal E^0_i} \sigma_i \left( \pseudoinv_j  \right)^2 \normHzero{I^h \ThetaE (\uji-w_i-(\ujj-w_j))}{\Eij}^2. \numberthis \label{eq: bound_I}
    \end{align*}
The last term in (\ref{eq: bound_I}) is estimated using the same techniques adopted in \cite[Proof of Lemma 4.5]{dryja2013analysis}, thus
we have
\[
\normHzero{I^h \ThetaE (\uji-w_i-(\ujj-w_j))}{\Eij}^2 \lesssim \left(1+\log \dfrac{H}{h} \right)^2 | \mathcal H^\Delta_j \ujj |_{H^1(\Omega_j')}^2 + \frac{1}{h} \normL{\uji - \ujj}{\Eij}^2, 
\]
from which it finally follows
\begin{equation}\label{eq: bound_I_bis}
\sigma_i  \seminormH{\mathcal H_i^\Delta I^h (\ThetaE \vi)}{i}^2  \lesssim \left( 1 + \log \dfrac{H}{h} \right)^2 \left[ \dfrac{1}{\tau} \seminormS{u_i}{i}^2 + \left(\dfrac{1}{\tau}+\frac{2\,\sigma_M}{C_m\,h}\right) \seminormS{u_j}{j}^2 \right]. 
\end{equation}
	Regarding the second term in (\ref{eq: eqn_2_terms}), since
	$\vii = \uii - \uji$ and $\vij = \uij - \ujj$, we have
	\begin{align*}
	&	\normL{I^h (\vi - \vj)}{\Eij}^2 = \int_{\Eij} \left( \vii - \vij \right)^2 		
		    = 			\int_{\Eij} \left( \uii - \uij - \left( \uji - \ujj \right) \right)^2				\\
			&= 			\normL{\uii - \uij - \left( \uji - \ujj \right)}{\Eij}^2						
			\lesssim 	\normL{\uii - \uij}{\Eij}^2 + \normL{\uji - \ujj}{\Eij}^2 						\\
			&\lesssim	d_i \left( \mathcal H'_i u_i, \, \mathcal H'_i u_i \right) + d_j \left( \mathcal H'_j u_j, \, \mathcal H'_j u_j \right)	
			\lesssim 	\seminormS{u_i}{i}^2 + \seminormS{u_j}{j}^2.
			\numberthis \label{eq: bound_II}
	\end{align*}
	In conclusion, adding (\ref{eq: bound_I}) and (\ref{eq: bound_II}), we obtain
	\begin{align*}
		\seminormS{\vi}{i}^2 &= \seminormS{(P_D u)_i}{i}^2 \leq \sumE \seminormS{I^h \ThetaE \vi}{i}^2 	\\
			&\lesssim 		\tau \left( 1 + \log \dfrac{H}{h} \right)^2 \left[ \dfrac{1}{\tau} \seminormS{u_i}{i}^2 + \left(\dfrac{1}{\tau}+\frac{2\,\sigma_M}{C_m\,h}\right) \seminormS{u_j}{j}^2 \right]  
			+ \sumEij \dfrac{C_m}{2} \left( \seminormS{u_i}{i}^2 + \seminormS{u_j}{j}^2 \right)    	\\
			&\lesssim 	\left( 1 + \log \dfrac{H}{h} \right)^2 \left[1+\frac{2\,\sigma_M\,\tau}{C_m\,h}\right] \left( \seminormS{u_i}{i}^2 + \seminormS{u_j}{j}^2 \right),
	\end{align*}
	and, by adding all the local contributions, we have the global bound
	\begin{equation*}
		|P_D u |_{S'}^2 \lesssim \left( 1 + \log \dfrac{H}{h} \right)^2 \left[1+\frac{2\,\sigma_M\,\tau}{C_m\,h}\right] |u|_{S'}^2.
	\end{equation*}

	\end{proof}

	 \begin{remark}
	 	The hypothesis of $h = \mathcal O(\tau)$ is needed since the quantity
	 	\begin{equation*}
	 		\varPsi (\tau, h) = 1+\frac{2\,\sigma_M\,\tau}{C_m\,h} 
	 	\end{equation*}
	 	can otherwise grow uncontrolled when the mesh size decreases, i.e. $h \rightarrow 0$. This is only a theoretical limitation, since extensive numerical tests show that this dependency does not hold.
	 \end{remark}
	
	\section{Numerical tests} \label{sec: numerical results}
	We report here extensive numerical tests showing scalability, quasi-optimality and robustness with respect to the time step $\tau$ of the proposed preconditioned solver.
	We consider a Matlab implementation, which takes into account a two-dimensional rectangular geometry where the extracellular domain frames the considered cells. This code considers a linear gap junction between cells and the Aliev-Panfilov ionic model \cite{aliev1996simple} for the update of the gating variables. The values of $\sigma_i$ have been chosen such that the extracellular space $\Omega_0$ presents different a conductivity with respect to all the cells $\Omega_i$, $i=1,\dots,N$ :
    \begin{equation*}
        \sigma_0 = 2e-3,
        \qquad
        \sigma_i = 3e-3,
        \quad \forall i = 1, \dots, N.
    \end{equation*}
    The time step is fixed to $\tau = 0.05$ ms, except for Section \ref{sec: dependence tau} where this parameter is varied, and  $100$ steps are taken, corresponding to a total time interval of $[0,5]$ ms. The external current needed for the activation is applied at one corner of the domain for $1$ ms, with an intensity of 50 mA/cm$^2$. The initial value for the potential $u_j(0)$ is set to $-85$ mV. We also show the robustness of the proposed solver over an activation-recovery time interval of $[0,300]$ ms in Section \ref{sec: time evolution}.

    We solve the Schur complement linear system by the Conjugate Gradient method, either unpreconditioned (denoted in the numerical test by \emph{Schur}) or preconditioned by the BDDC (denoted by \emph{BDDC}). We also compared these solvers with the unpreconditioned solution of the full system using the Conjugate Gradient (denoted by \emph{CG}). 
    The stopping criterion compares the $L^2$-norm of the relative (preconditioned) residual with a fixed tolerance of $10^{-6}$.
	All the tests have been performed on a workstation equipped with an Intel i9-10980XE CPU with 18 cores running at 3.00 GHz.
	
	\subsection{Time evolution} \label{sec: time evolution}
	In this Section we show the robustness of the proposed solver during a time interval of $[0,300]$ ms, discretized with fixed time step of $\tau=0.05$ ms (thus considering 6'000 steps). 
	We consider $14\times14$ cells, each represented as an elongated rectangle and discretized with $24 \times 4$ finite elements.
	%
	We show in Figure \ref{fig: time evolution} the first 6 time iterations, where it is possible to appreciate the activation of the first group of cells and the subsequent propagation. 
 The discrete and step colormap could be interpreted as an indicator of a poor space discretization, but this phenomenon is due to the discrete nature of the model, where the cells are represented as independent entities.
	As a matter of fact, if we plot the transmembrane voltage at two fixed points, as shown in Figure \ref{fig: time evolution specific points}, we observe that the usual shape of the action potential appears.
	\begin{figure}
	    \centering
	    \includegraphics[width=.7\textwidth]{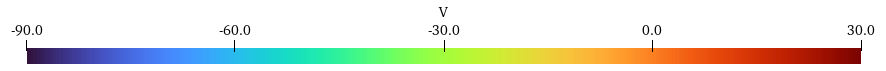}   \\
	    \vspace{3mm}
	    t = 0.0 ms  \\
	    \vspace{1mm}
	    \includegraphics[width=.8\textwidth]{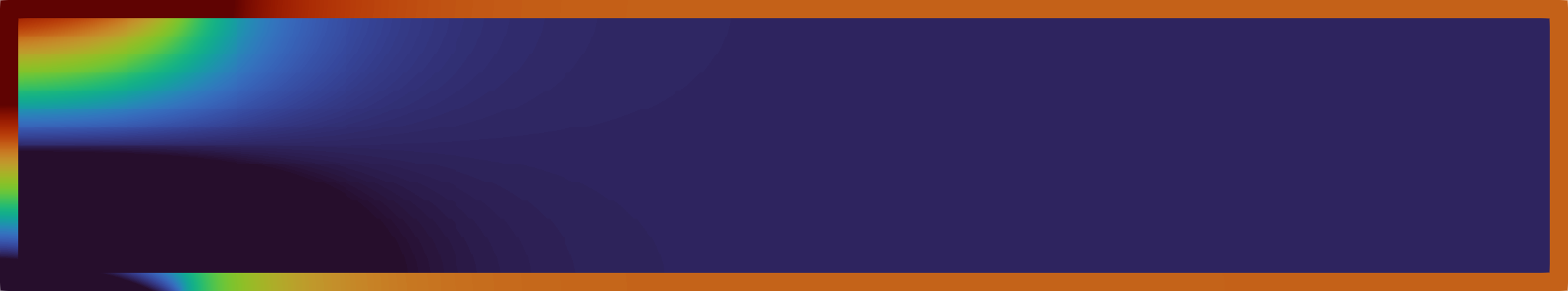}   \\
	    \vspace{1mm}
	    t = 1.0 ms  \\
	    \vspace{1mm}
	    \includegraphics[width=.8\textwidth]{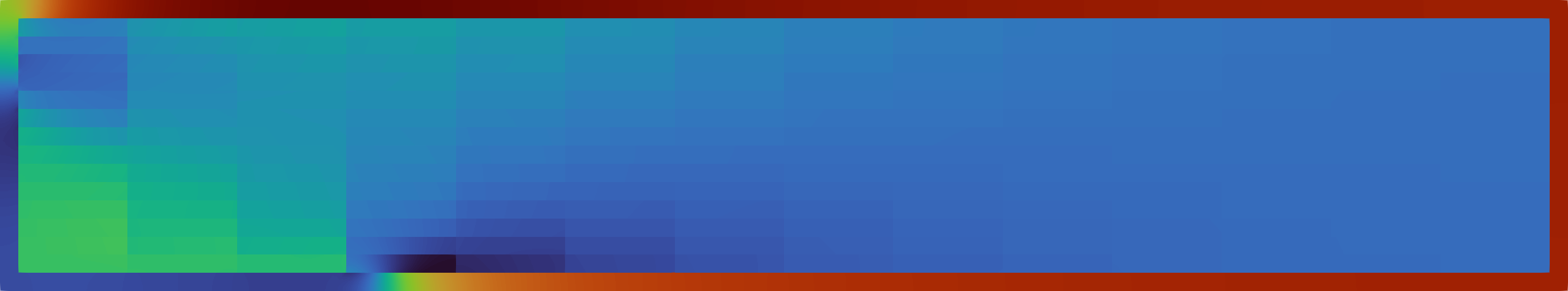}   \\
	    \vspace{1mm}
	    t = 2.0 ms  \\
	    \vspace{1mm}
	    \includegraphics[width=.8\textwidth]{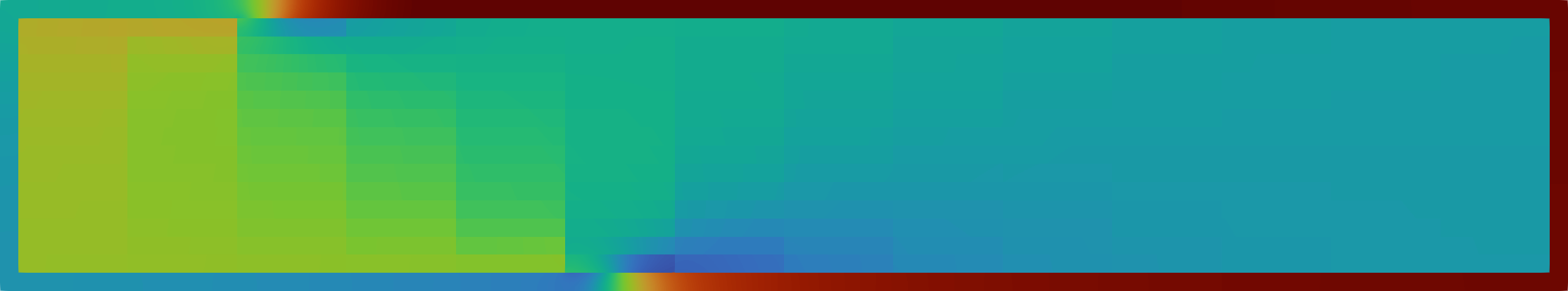}   \\
	    \vspace{1mm}
	    t = 3.0 ms  \\
	    \vspace{1mm}
	    \includegraphics[width=.8\textwidth]{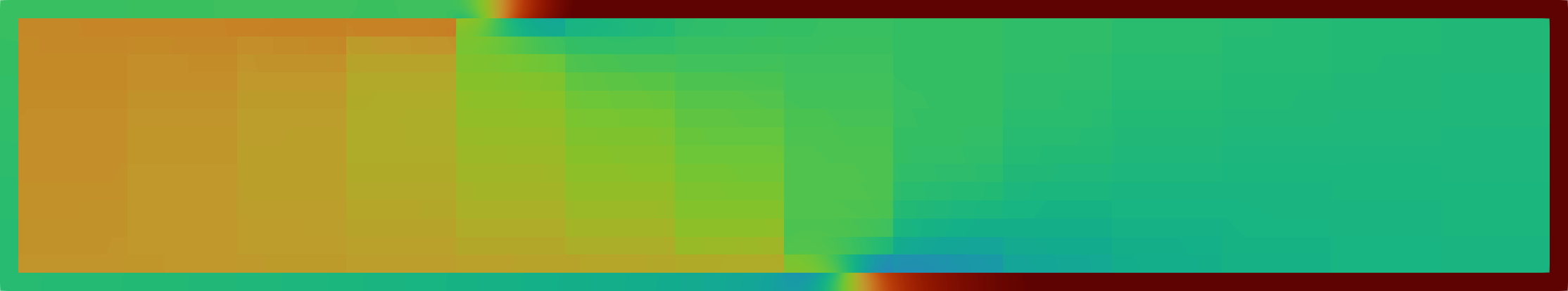}   \\
	    \vspace{1mm}
	    t = 4.0 ms \\
	    \vspace{1mm}
	    \includegraphics[width=.8\textwidth]{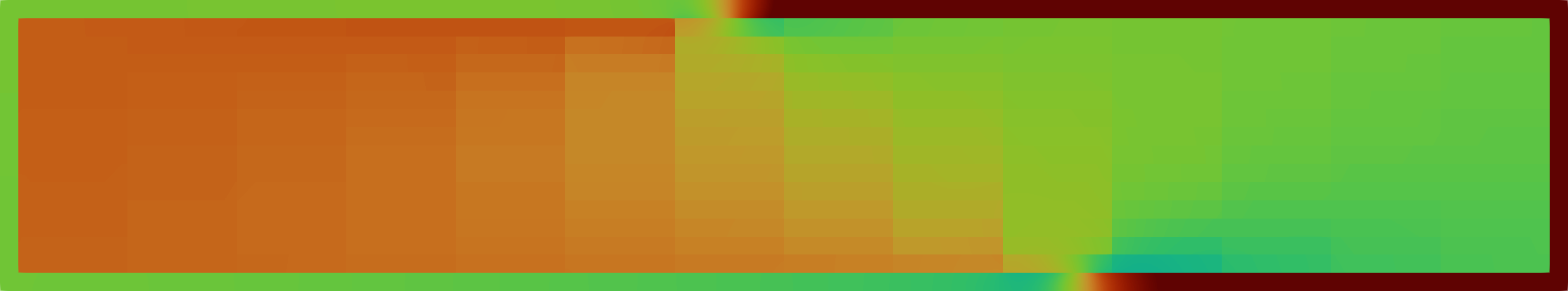}   \\
	    \vspace{1mm}
	    t = 5.0 ms \\
	    \vspace{1mm}
	    \includegraphics[width=.8\textwidth]{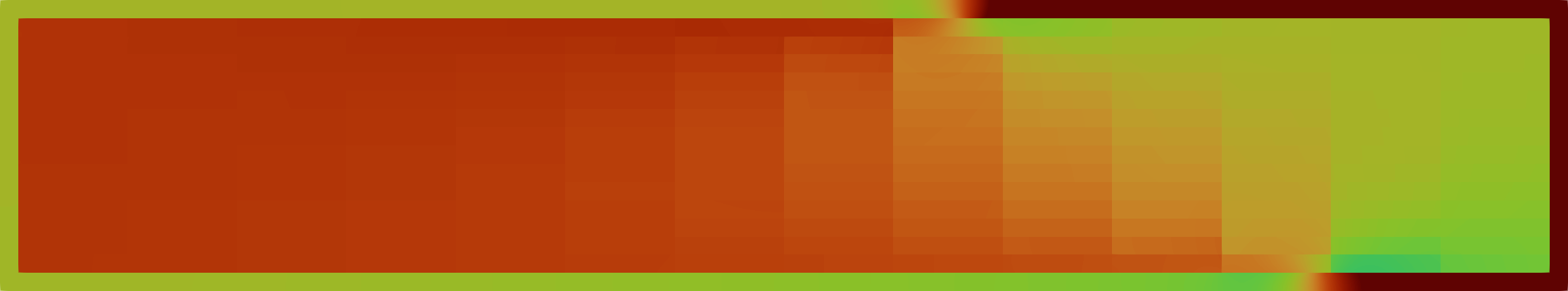}   \\
	    \caption{Time evolution at different time steps.}
	    \label{fig: time evolution}
	\end{figure}
	
	\begin{figure} 
     	\centering
     	\begin{overpic}[abs,unit=1mm,width=.85\linewidth, trim=0 -1cm 0 0]{plots/timeframe/plot_196cells_000.png}
            \put(10,2){\color{white} \bf \large $\bullet$ A}
            \put(120,20){\color{white} \bf \large $\bullet$  B}
        \end{overpic}
     	\begin{tikzpicture}
     	    \begin{axis}
     	        [
                width=.8\textwidth,
                height=2.5cm,
                scale only axis,
                scaled ticks = false,
                tick label style={/pgf/number format/fixed, /pgf/number format/precision=5},
                tick label style={font=\tiny},
                xlabel={Time (ms)},
     			xmin=-5, xmax=305,
     			ymin=-110, ymax=50,
                unbounded coords=jump]
                
                \addplot+[cyan, line width=1.5pt, mark=none] table [x=time, y=tap_1, col sep=comma, filter discard warning=false, unbounded coords=discard, each nth point=20] {results/time_plot.csv};
                
                \addplot+[green!50!black, dashed, line width=1.5pt, mark=none] table [x=time, y=tap_2, col sep=comma, filter discard warning=false, unbounded coords=discard, each nth point=20] {results/time_plot.csv};
                
                \legend{A, B}
   
     	    \end{axis}
		\end{tikzpicture}
        \caption{Time evolution on $[0,300]$ ms of transmembrane voltages (mV) at two fixed points. }
        \label{fig: time evolution specific points}
     \end{figure}
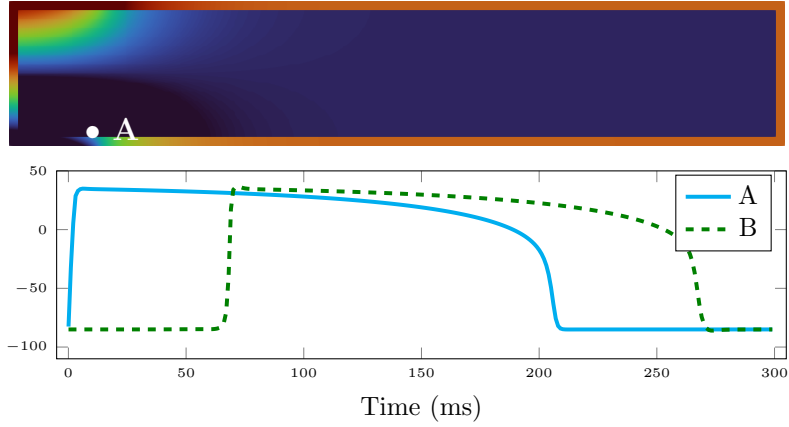
	
	Turning to the numerics, we report the trend of the number of linear iterations, the condition number and the CPU timings (in seconds) over the considered time interval. We show the performance of the unpreconditioned conjugate gradient (\emph{Schur}) as well as the BDDC preconditioned solver (\emph{BDDC}) for the Schur complement system and the case where the full system is solved (\emph{CG}), see Figure \ref{fig: time evolution iterations and condnum}.
	The BDDC preconditioner is clearly superior, both in terms of number of linear iteration and condition number, since for the Schur and CG cases we have higher values.
    Moreover, the BDDC preconditioner reacts more sensitive to changes to the right hand side.
	
	\begin{figure}
     	\centering
     	\begin{tikzpicture}
     	    \begin{groupplot}
     	        [
     	        group style={group size=1 by 3, vertical sep=0pt, ylabels at=edge left,
     	        	x descriptions at=edge bottom},
                width=.8\textwidth,
                height=2.5cm,
                scale only axis,
                scaled ticks = false,
                tick label style={/pgf/number format/fixed, /pgf/number format/precision=5},
                tick label style={font=\tiny},
                xlabel={Time (ms)},
     			xmin=-5, xmax=305,
                unbounded coords=jump]
                
                \nextgroupplot[legend style={at={(0.99,0.31)},anchor=east}, ymax=34]
                \addplot+[cyan, line width=1.5pt, mark=none] table [x=time, y=iter_bddc, col sep=comma, filter discard warning=false, unbounded coords=discard, each nth point=10] {results/time_plot.csv};
                
                \addplot+[cyan, dashed, line width=1.5pt, mark=none] table [x=time, y=cond_bddc, col sep=comma, filter discard warning=false, unbounded coords=discard, each nth point=10] {results/time_plot.csv};
                
                \legend{BDDC iter, BDDC $k_2$};
                
                \nextgroupplot[height=2.5cm, legend style={at={(0.99,0.165)},anchor=east}, ymin=180]
   				\addplot+[orange, line width=1.5pt, mark=none] table [x=time, y=iter_schur, col sep=comma, filter discard warning=false, unbounded coords=discard, each nth point=10] {results/time_plot.csv};
   				
   				\addplot+[green!50!black, line width=1.5pt, mark=none] table [x=time, y=iter_pcg, col sep=comma, filter discard warning=false, unbounded coords=discard, each nth point=10] {results/time_plot.csv};
   				
   				\legend{Schur iter, CG iter};
   				
   				\nextgroupplot[height=2.5cm, legend style={at={(0.99,0.185)},anchor=east}, ymax=85000]
   				\addplot+[orange, dashed, line width=1.5pt, mark=none] table [x=time, y=cond_schur, col sep=comma, filter discard warning=false, unbounded coords=discard, each nth point=10] {results/time_plot.csv};
   				
   				\addplot+[green!50!black, dashed, line width=1.5pt, mark=none] table [x=time, y=cond_pcg, col sep=comma, filter discard warning=false, unbounded coords=discard, each nth point=10] {results/time_plot.csv};
   				
   				\legend{Schur $k_2$, CG $k_2$};
   				
     	    \end{groupplot}
		\end{tikzpicture}
        \caption{Time evolution on $[0,300]$ ms of linear iterations (iter) and condition number ($k_2$) for the three solver under consideration (BDDC, Schur, CG). }
        \label{fig: time evolution iterations and condnum}
     \end{figure}
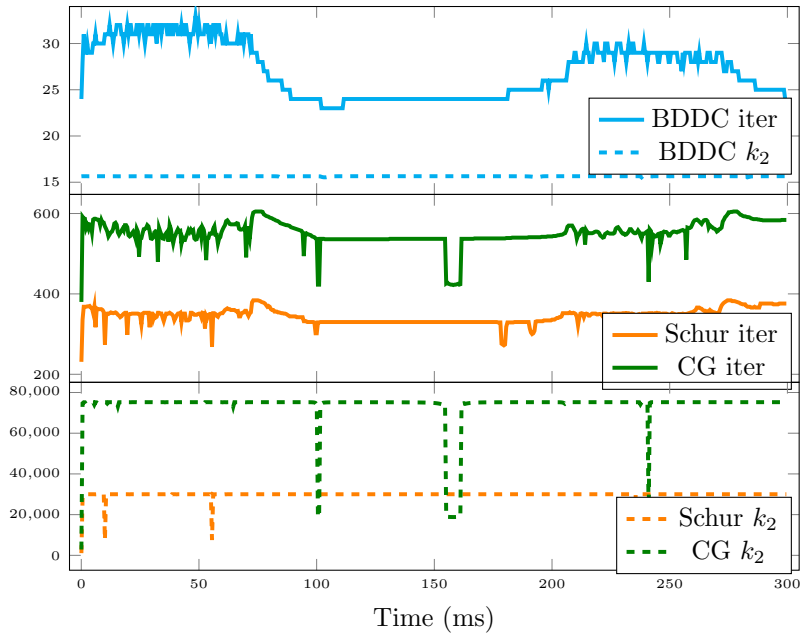

	\subsection{Scalability}
	For the scalability tests, we consider a time interval of $[0,5]$ ms (for a total of 100 fixed time steps). 
	We increase the number of cells from $2\times2$ to $32\times32$, each of which is discretized by $24\times4$ finite elements, resulting in more than $10^5$ degrees of freedom in the largest case.
	The results are collected in Table \ref{tab: weak scal}, where we report the condition number ($k_2$) and the number of linear iterations (it) at the final time step. We consider the unpreconditioned conjugate gradient (\emph{Schur}) as well as the BDDC preconditioned solver (\emph{BDDC}) for the Schur complement system and the case where the full system is solved (\emph{CG}). The BDDC is scaled with a standard $\rho$-scaling procedure.
	The advantage of the BDDC preconditioner with respect to the others is clear: the number of linear iterations mildly increases at the beginning but remains constant afterwards, while for the other cases it increases uncontrolled. The same conclusion holds for the condition number.
	
	\begin{table}[!ht]
	    \centering
	    \caption{\emph{Weak scalability.} Weak scalability tests on $[0,5]$ ms, fixed time step $\tau=0.05$. Increasing number of cells from $2\times2$ to $32\times32$, each discretized with $24\times4$ finite elements. Reported number of linear iterations (it) and condition number ($k_2$) at time $t=5$ ms.}
	    \label{tab: weak scal}
	    \begin{tabular}{ccccccc}
	        \toprule
	        \multirow{2}{*}{nb of cells} &\multicolumn{2}{c}{BDDC}    &\multicolumn{2}{c}{Schur}     &\multicolumn{2}{c}{CG}    \\
	                               &$k_2$              &it                 &$k_2$              &it         &$k_2$              &it       \\      
	        \midrule
	        $2\times2$      &9.28       &15         &57.71          &62         &2.20e+03       &124            \\
	        $4\times4$      &12.35      &21         &2.34e+03       &114        &7.09e+03       &204            \\
	        $8\times8$      &14.77      &27         &9.36e+03       &210        &2.50e+04       &355            \\
	        $12\times12$    &15.40      &28         &2.05e+04       &305        &5.26e+04       &494            \\
	        $16\times16$    &15.82      &29         &3.52e+04       &395        &8.94e+04       &624            \\
	        $20\times20$    &16.02      &30         &5.13e+04       &473        &1.32e+05       &756            \\
	        $24\times24$    &16.15      &30         &6.97e+04       &547        &1.79e+05       &877            \\
	        $28\times28$    &16.24      &30         &8.74e+04       &610        &1.89e+05       &920            \\
	        $32\times32$    &16.32      &30         &1.09e+05       &677        &1.78e+05       &932            \\
	        \bottomrule
	    \end{tabular}
	\end{table}

	\subsection{Optimality tests}
    We report here the results of quasi-optimality tests, where we fix the number of cells to $4\times4$ and we increase the finite element size.
    The simulation time considers $100$ fixed time steps, over the interval $[0,5]$~ms.
    As for the scalability, we observe from results in Table \ref{tab: optimality matlab} that our theoretical result \ref{lemma: 3.1} is confirmed numerically. As a matter of fact, the condition number ($k_2$) for the BDDC mildly increases, following the expected polylogarithmic trend, while for the other cases this parameter increases uncontrolled. 
    The number of linear iterations (it) remains almost bounded for the BDDC preconditioner, while it increases and is considerably higher for the unpreconditioned case (\emph{Schur}). 
	
	\begin{table}[!ht]
	    \centering
	    \caption{\emph{Optimality tests.} Optimality tests on $[0,5]$ ms, fixed time step $\tau=0.05$. Fixed number of $4\times4$ cells, each discretized with increasing number of finite elements. Reported number of linear iterations (it) and condition number $k_2$ at time $t=5$ ms. }
	    \label{tab: optimality matlab}
	    \begin{tabular}{cccccccc}
	        \toprule
	        \multirow{2}{*}{$Lcy$}  &\multirow{2}{*}{$H/h=6 \times Lcy$}  &\multicolumn{2}{c}{BDDC}    &\multicolumn{2}{c}{Schur}     &\multicolumn{2}{c}{CG}    \\
	        &   &$k_2$              &it                 &$k_2$              &it         &$k_2$              &it       \\   
	        \midrule
	        2   &12     &10.63      &19         &1.20e+03       &85         &2.26e+03       &118        \\
	        3   &18     &11.63      &20         &1.76e+03       &101        &4.31e+03       &160        \\
	        4   &24     &12.35      &21         &2.34e+03       &114        &7.09e+03       &204        \\
	        5   &30     &12.93      &22         &2.90e+03       &126        &1.04e+04       &246        \\
	        6   &36     &13.41      &23         &3.43e+03       &136        &1.47e+04       &288        \\
	        7   &42     &13.82      &24         &3.99e+03       &145        &1.93e+04       &326        \\
	        8   &48     &14.19      &24         &4.58e+03       &154        &2.47e+04       &365        \\
	        9   &54     &14.51      &24         &5.15e+03       &160        &3.11e+04       &405        \\
	        10  &60     &14.80      &25         &5.68e+03       &168        &3.79e+04       &443        \\
	        \bottomrule
	    \end{tabular}
	\end{table}
	
	\subsection{Dependence on the time step} \label{sec: dependence tau}
	In this Section, we investigate the dependence on the time step $\tau$.
	We fix the number of cells to $12\times12$, each cell discretized with $24\times4$ finite elements. The time interval considered is $[0,5]$ ms, and we vary the time step $\tau$ from $0.005$ to $0.1$. 
	It is very interesting to notice that the condition number ($k_2$) and iteration counts (it) of the \emph{Schur} and \emph{CG} cases in Table \ref{tab: dependence time step} increase slightly with shrinking time step. This behaviour, in contrast to standard parabolic problems, is caused by the algebraic equations in the system.
	The BDDC preconditioner does not suffer in terms of condition number or iteration counts, which increase only slightly with increasing time step size.
	
	\begin{table}[!ht]
	    \centering
	    \caption{\emph{Dependence on time step tests.} Testing the dependence on time step on $[0,5]$ ms, increasing time step from $\tau=0.005$ to $\tau=0.1$. Fixed number of $12\times12$ cells, each discretized with $24\times4$ finite elements. Reported number of linear iterations (it) and condition number $k_2$ at time $t=5$ ms.}
	    \label{tab: dependence time step}
	    \begin{tabular}{lcccccc}
	        \toprule
	        \multirow{2}{*}{$\tau$} &\multicolumn{2}{c}{BDDC}    &\multicolumn{2}{c}{Schur}     &\multicolumn{2}{c}{CG}    \\
	                   &$k_2$      &it            &$k_2$      &it         &$k_2$       &it       \\      
	        \midrule
	        $0.005$      &11.73     &25       &1.08e+05     &354       &2.54e+05   &630        \\
	        $0.01$       &11.93     &26       &9.94e+04     &375       &2.30e+05   &649       \\
	        $0.02$       &12.95     &26       &3.69e+04     &340       &9.38e+04   &584       \\
	        $0.05$       &15.40     &28       &2.05e+04     &305       &5.26e+04   &494        \\
	        $0.1$        &14.26     &28       &3.58e+04     &343       &8.77e+04   &573       \\
	        \bottomrule
	    \end{tabular}
	\end{table}

	\section{Conclusions} \label{sec: conclusions}
	We have designed, analyzed and tested Balancing Domain Decomposition by Constraints (BDDC) preconditioners for composite DG-type discretizations of cardiac cell-by-cell models, where each cell is associated with a subdomain of the domain decomposition. The keypoint in our construction is the definition of extended dual and primal spaces for the degrees of freedom, where additional constraints are introduced in order to transfer information between subdomains without affecting the overall discontinuity of the global solution.
	Our theoretical convergence rate analysis shows scalability and quasi-optimality of the proposed BDDC preconditioned cell-by-cell operator. We validated our theoretical bounds with extensive two-dimensional tests, where numerical results demonstrate the BDDC scalability, quasi-optimality, and independence from the time step size. 
	Future works will concern the parallelization of the cell-by-cell solver and its extension to three-dimensional models, as well as its integration in the Kaskade library \cite{gotschel2021flexible} with spectral deferred correction methods - in the same fashion as for the Bidomain, see \cite{chegini2022efficient, weiser_2022}. Further possible directions include the design of overlapping Schwarz preconditioners \cite{antonietti2007schwarz, antonietti2008multiplicative} and nonlinear solvers \cite{barnafi2022parallel} for implicit and linearly implicit time discretizations for DG discretizations of cell-by-cell models.

    \section*{Acknowledgements}
    This work was supported by the European High-Performance Computing Joint Undertaking EuroHPC under grant agreement No 955495 (MICROCARD) co-funded by the Horizon 2020 programme of the European Union (EU), the
    French National Research Agency ANR, the German Federal Ministry of Education and Research, the Italian ministry of economic development, the Swiss State Secretariat for Education, Research and Innovation, the
    Austrian Research Promotion Agency FFG, and the Research Council of Norway.


\end{document}